\newtheorem{thm}{Theorem}[section]
\newtheorem{lem}[thm]{Lemma}
\newtheorem{defi}[thm]{Definition}
\newtheorem{corl}[thm]{Corollary}
\newtheorem{xrem}{Remark}
\newtheorem{exm}[thm]{Example}
\DeclareMathOperator{\Nef}{{Nef}}
\DeclareMathOperator{\rank}{{rank}}
\DeclareMathOperator{\ME}{{ME}}
\DeclareMathOperator{\Pic}{{Pic}}
\DeclareMathOperator{\Sym}{{Sym}}
\DeclareMathOperator{\Eff}{{Eff}}
\DeclareMathOperator{\End}{{End}}
\DeclareMathOperator{\Div}{{Div}}
\DeclareMathOperator{\Proj}{{Proj}}
\DeclareMathOperator{\Bl}{{Bl}}
\begin{document}
\baselineskip=16pt

\subjclass[2020]{Primary 14J60, 14E25 14N05, 14J40, 14E30 ; Secondary 14C17}
\keywords{Nef cone, Pseudoeffective cone, Projective bundle, Semistability, Weak Zariski decomposition}
\author{Snehajit Misra}

\address{\rm Author's Address : Tata Institute of Fundamental Research (TIFR), Homi Bhabha Road, Mumbai 400005, India.}
\email{smisra@math.tifr.res.in}

\begin{abstract}
In this article, we consider the projective bundle $\mathbb{P}_X(E)$ over a smooth complex projective variety $X$, where $E$ is a semistable bundle on
$X$ with $c_2(\End(E)) =0$. We give a necessary and sufficient condition to get the equality 
$ \Nef^1\bigl(\mathbb{P}_X(E)\bigr) = \overline{\Eff}^1\bigl(\mathbb{P}_X(E)\bigr)$ of nef cone and pseudo-effective cone of divisors in 
$\mathbb{P}_X(E)$. As an application of our result, we show the equality of nef and pseudo-effective cones of divisors of projective bundles over some 
special varieties. In particular, we show that weak Zariski decomposition exists on these projective bundles. We also show that weak Zariski
decomposition exists for fibre product $\mathbb{P}_C(E)\times_C\mathbb{P}_C(E')$ over a smooth projective curve $C$. Finally, we show that a semistable
bundle $E$ of rank $r\geq 2$ with $c_2\bigl(\End(E)\bigr) = 0$ on a smooth complex projective surface of Picard number 1 is $k$-homogeneous i.e. 
$\overline{\Eff}^k\bigl(\mathbb{P}_X(E)\bigr) = \Nef^k\bigl(\mathbb{P}_{X}(E)\bigr)$ for all $1 \leq k < r$.
\end{abstract}

\title{Pseudoeffective cones of projective bundles and weak Zariski decomposition}

\maketitle

\section{introduction}
 In the last few decades, a number of notions of positivity have been used to understand the geometry of the higher dimensional projective varieties. 
 The cone of nef $\mathbb{R}$-divisors on a smooth projective variety $X$, denoted by $\Nef^1(X)$, and closure of the cone generated by effective
 $\mathbb{R}$-divisors on $X$ called as pseudo-effective cone, denoted by $\overline{\Eff}^1(X)$, are two fundamental invariants of $X$ which play a
 very crucial role in this understanding. 
 
Most of the developments in this direction has been successfully summarized in \cite{L1}, \cite{L2}.
In \cite{Z62}, Zariski proved that for any effective divisor $D$ on a smooth projective surface $X$, there exists uniquely determined 
$\mathbb{Q}$-divisors $P$ and $N$ with  $D = P + N$ satisfying the following:

(i) $P$ is nef and $N$ is effective.

(ii) either $N$ is zero or has negative definite intersection matrix of its components.

(iii) $P\cdot C = 0$ for every irreducible component $C$ of $N$.

 Fujita \cite{F79} later extended the above decomposition for pseudo-effective $\mathbb{R}$-divisors on surfaces. This decomposition gives a great deal of information about the linear series on a smooth surface. For example, such decomposition is used as a very powerful tool to prove the rationality of the volume of integral divisors on surfaces. Many attempts has been made to generalize this decomposition on higher dimensional varieties, for instance the Fujita-Zariski decomposition \cite{F86} and the CKM-Zariski decomposition \cite{P03} . In fact, the rationality of the volume of a big divisor admitting CKM-Zariski decomposition is proved similarly. However, Cutkosky \cite{C86} constructed examples of effective big divisors on higher dimensional varieties  with irrational volume, which proved that a CKM-Zariski decomposition on a smooth projective variety does not exist in general. Nakayama \cite{N04} constructed a related example to show that it is impossible to find a CKM-Zariski decomposition 
even if one allows $N$ and $P$ to be $\mathbb{R}$-divisors in the definition of CKM-Zariski decomposition.
\begin{defi}
\rm (\it Weak Zariski Decomposition \rm) A weak Zariski decomposition for an $\mathbb{R}$-divisor
$D$ on a normal projective variety $X$ consists of a normal variety $X'$, a projective bi-rational morphism $f : X' \longrightarrow X$, and a numerical equivalence $f^*D \equiv  P + N$ such that $P \in \Nef^1(X')$ and $N \in {\Eff}^1(X')$.
\end{defi}
The existence of weak Zariski decomposition also has very useful consequences. For example, in \cite{B12}, it is shown that the existence of weak Zariski decomposition for adjoint divisors $K_X + B$ of a log canonical pair $(X,B)$ is equivalent to the existence of log minimal model for $(X,B)$. However, in \cite{Le14} an example is found of a pseudo-effective $\mathbb{R}$-divisor on the blow-up of $\mathbb{P}^3$ in 9 general points, which does not admit a weak Zariski decomposition.

The existence of weak Zariski decomposition of pseudo-effective $\mathbb{R}$-divisors on projective bundles $\mathbb{P}_X(E)$ has been studied in \cite{N04} and in \cite{M-S-C}, when the base space $X$ is a smooth curve and a smooth projective variety with Picard number 1 respectively. In this paper, we consider a vector bundle $E$ of rank $r$ on a smooth projective variety $X$ together with the projectivization map $\pi : \mathbb{P}_X(E) \longrightarrow X$. We investigate the existence of weak Zariski decomposition for pseudo-effective $\mathbb{R}$-divisors on $\mathbb{P}_X(E)$. By \cite{Le14}, the numerical classes of $\mathbb{R}$-divisors in a smooth projective variety $X$ admitting weak Zariski decomposition forms a cone in $\overline{\Eff}^1(X)$, and hence it is enough to prove existence of such decomposition for every extremal ray of the pseudo-effective cone $\overline{\Eff}^1(X)$ (see Proposition 1 in \cite{M-S-C}). In particular, if ${\Eff}^1(X)$ is a closed cone ( e.g., when $X$ is a Mori dream space), then weak Zariski decomposition always exists. Also,   if $\Nef^1\bigl(\mathbb{P}_X(E)\bigr) = \overline{\Eff}^1\bigl(\mathbb{P}_X(E)\bigr)$ for some vector bundle $E$ on a specific $X$, then weak Zariski decomposition always exist on $\mathbb{P}_X(E)$ because all the extremal rays are nef generators in this case. However, pseudo-effectivity does not imply nefness in $\mathbb{P}_X(E)$ in general.

In his paper \cite{Mi}, Miyaoka found that in characteristic 0,  
a vector bundle $E$ on a smooth projective curve $C$ is semistable if and only if the
nef cone $\Nef^1\bigl(\mathbb{P}_C(E)\bigr)$ and pseudo-effective cone $\overline{\Eff}^1\bigl(\mathbb{P}_C(E)\bigr)$ coincide.
However, examples are given in \cite{B-H-P} to show that this characterization of semistability in terms of equality of nef and pseudo-effective cone 
is not true for vector bundles on higher dimensional projective varieties. More specifically, they produce examples of semistable bundles $E$ on any 
smooth projective variety $X$ with $c_2\bigl(\End(E)\bigr)$ not numerically zero, but
$\Nef^1\bigl(\mathbb{P}_X(E)\bigr) \subsetneq \overline{\Eff}^1\bigl(\mathbb{P}_X(E)\bigr)$ (see section 5, \cite{B-H-P}). 
In section 3 of this article, we prove the following :
\begin{thm}\label{thm1.2}
Let $E$ be a semistable vector bundle of rank $r$ on a smooth complex projective variety $X$ with $c_2\bigl(\End(E)\bigr) = 0$, and 
 $\pi : \mathbb{P}_X(E) \longrightarrow X$ be the projectivization map. Denote 
 $\lambda_E := c_1(\mathcal{O}_{\mathbb{P}_X(E)}(1)) - \frac{1}{r}\pi^*c_1(E)$. Then the following are equivalent:
 \begin{itemize}
  \item{$(1)$ \it $\Nef^1(X) = \overline{\Eff}^1(X)$.}
  \item{$(2)$ \it $c_1\bigl(\pi_*\mathcal{O}_{\mathbb{P}_X(E)}(D)\bigr) \in \Nef^1(X)$ for every effective divisor $D$ in $\mathbb{P}_X(E)$.}
  \item{$(3)$ \it $\Nef^1\bigl(\mathbb{P}_X(E)\bigr) = \overline{\Eff}^1\bigl(\mathbb{P}_X(E)\bigr).$}
 \end{itemize}
 In addition, if \hspace{1mm} $\Nef^1(X) = \overline{\Eff}^1(X)$ is a finite polyhedron generated by nef classes $L_1, L_2, \cdots,L_k$, then
 \begin{align*}
  \Nef^1\bigl(\mathbb{P}_X(E)\bigr) = \overline{\Eff}^1\bigl(\mathbb{P}_X(E)\bigr) = 
  \Bigl\{y_0\lambda_E + y_1\pi^*L_1+\cdots+y_k\pi^*L_k\mid y_i \in \mathbb{R}_{\geq 0}\Bigr\},
 \end{align*}
\end{thm}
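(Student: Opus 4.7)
My plan hinges on the theorem of Nakayama (also due to Miyaoka) that for a semistable bundle $E$ on $X$ with $c_2(\End E)=0$, the class $\lambda_E$ is nef on $\mathbb{P}_X(E)$. Combined with the decomposition $N^1(\mathbb{P}_X(E))_{\mathbb{R}}=\mathbb{R}\lambda_E\oplus\pi^*N^1(X)_{\mathbb{R}}$, this lets me write every class uniquely as $a\lambda_E+\pi^*\gamma$. The projection formula then gives $\pi_*\mathcal{O}_{\mathbb{P}(E)}(D)\cong\Sym^aE\otimes L$ for a suitable line bundle $L$ on $X$ whenever $[D]=a\lambda_E+\pi^*\gamma$ with $a\geq 0$, so that $c_1(\pi_*\mathcal{O}_{\mathbb{P}(E)}(D))=\binom{a+r-1}{r-1}\gamma$ is a positive multiple of $\gamma$. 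In particular, condition (2) is equivalent to requiring that the $\pi^*$-component $\gamma$ of every effective class on $\mathbb{P}_X(E)$ be nef on $X$.

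I close the loop via (2) $\Rightarrow$ (3) $\Rightarrow$ (1) $\Rightarrow$ (2). For (2) $\Rightarrow$ (3), an effective $D\equiv a\lambda_E+\pi^*\gamma$ has $a\geq 0$ from fibre-degree positivity, and (2) supplies $\gamma$ nef; since $\lambda_E$ is nef and $\pi^*\gamma$ is the pull-back of a nef class, $[D]$ is nef. For (3) $\Rightarrow$ (1), take $\alpha\in\overline{\Eff}^1(X)$; then $\pi^*\alpha\in\overline{\Eff}^1(\mathbb{P}_X(E))=\Nef^1(\mathbb{P}_X(E))$ by (3). For any curve $C\subset X$ I pass to the normalization $\widetilde{C}\to C$ and pick a rank-one quotient of $E|_{\widetilde{C}}$; the resulting section of the $\mathbb{P}^{r-1}$-bundle $\mathbb{P}_X(E|_{\widetilde{C}})\to\widetilde{C}$ produces a curve $\Gamma\subset\mathbb{P}_X(E)$ with $\pi_*\Gamma$ a positive multiple of $C$, and $\pi^*\alpha\cdot\Gamma\geq 0$ forces $\alpha\cdot C\geq 0$, so $\alpha$ is nef.

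The heart of the proof is (1) $\Rightarrow$ (2). Given effective $D$ on $\mathbb{P}_X(E)$ with $[D]=a\lambda_E+\pi^*\gamma$, $a\geq 0$, I want $\gamma$ nef. If $a=0$, $D$ has fibre degree $0$, so all its irreducible components are vertical, and $D=\pi^*D'$ for some effective $D'$; then $\gamma=[D']\in\overline{\Eff}^1(X)$, which is nef by (1). If $a>0$, the sheaf $F:=\pi_*\mathcal{O}_{\mathbb{P}(E)}(D)\cong\Sym^aE\otimes L$ is semistable with $c_2(\End F)=0$ in characteristic $0$ (symmetric powers and line-bundle twists preserve these properties), and inherits a non-zero global section from $D$. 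The problem is thus reduced to the following key lemma: \emph{a semistable bundle $F$ with $c_2(\End F)=0$ and $H^0(X,F)\neq 0$ has $c_1(F)\in\overline{\Eff}^1(X)$}; granted this, $c_1(F)=\binom{a+r-1}{r-1}\gamma$ is pseudo-effective, hence nef by (1).

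The main obstacle is this key lemma. My approach is to restrict $F$ to a sufficiently general member $C$ of a covering family of curves whose class lies in $\overline{\mathrm{Mov}}_1(X)$. Extensions of the Mehta--Ramanathan restriction theorem to movable classes (e.g.\ Greb--Kebekus--Peternell) show that $F|_C$ remains semistable, and the global section restricts non-trivially to $F|_C$, so semistability plus a section on the smooth curve $C$ forces $\deg(F|_C)\geq 0$, i.e.\ $c_1(F)\cdot[C]\geq 0$. Since such classes span $\overline{\mathrm{Mov}}_1(X)$, the Boucksom--Demailly--P\u{a}un--Peternell duality $\overline{\Eff}^1(X)=\overline{\mathrm{Mov}}_1(X)^{\vee}$ delivers $c_1(F)\in\overline{\Eff}^1(X)$. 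Finally, the explicit polyhedral description follows immediately: $\supseteq$ by nefness of $\lambda_E$ and of each $\pi^*L_i$; and for $\subseteq$, any element of $\Nef^1(\mathbb{P}_X(E))=\overline{\Eff}^1(\mathbb{P}_X(E))$ has the form $a\lambda_E+\pi^*\gamma$ with $a\geq 0$ and $\gamma\in\Nef^1(X)=\overline{\Eff}^1(X)$, hence $\gamma=\sum_{i=1}^k y_iL_i$ with $y_i\geq 0$.
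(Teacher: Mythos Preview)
Your argument is correct and follows the same overall architecture as the paper: the cycle $(1)\Rightarrow(2)\Rightarrow(3)\Rightarrow(1)$, with the crucial step $(1)\Rightarrow(2)$ handled by restricting the section of $F=\Sym^a E\otimes L$ to a general movable curve, using semistability on the curve to force $\deg(F|_C)\geq 0$, and then invoking the BDPP duality $\overline{\Eff}^1(X)=\overline{\mathrm{Mov}}_1(X)^{\vee}$.

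The one point worth noting is that your appeal to Greb--Kebekus--Peternell is unnecessary and slightly heavier than what the paper uses. You already cite Nakayama's theorem that $\lambda_E$ is nef; the full statement of that result (as recorded in the paper's Theorem~2.1, equivalence $(1)\Leftrightarrow(3)$) says that semistability together with $c_2(\End E)=0$ is equivalent to $\phi^*E$ being semistable for \emph{every} non-constant map $\phi:C\to X$ from a smooth curve. Since you have already observed that $F=\Sym^aE\otimes L$ is again semistable with $c_2(\End F)=0$, this immediately gives semistability of $F|_{\widetilde{C}}$ for the normalization of \emph{any} curve $C\subset X$, with no genericity needed and no Mehta--Ramanathan--type restriction theorem required. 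The paper exploits exactly this: pick any movable curve, move it off the zero locus of the section, normalize, and conclude. Your route through GKP reaches the same destination but via a deeper and less elementary result; replacing it with the Nakayama characterization you already have in hand would streamline the proof. Your $(3)\Rightarrow(1)$ via lifting curves through a section is also correct, though the paper's contrapositive (an effective non-nef $D$ on $X$ pulls back to an effective non-nef $\pi^*D$) is quicker.
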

In general, the nef cones $\Nef^1\bigl(\mathbb{P}_X(E)\bigr)$ might not be a finite polyhedron when the Picard number $\rho(X)$ is at least 3, and in 
such cases these cones are not so easy to calculate. As applications of the above result, we show the equality $\Nef^1\bigl(\mathbb{P}_X(E)\bigr) = \overline{\Eff}^1\bigl(\mathbb{P}_X(E)\bigr)$ for semistable bundles $E$ with $c_2(\End(E)) = 0 $ on different smooth projective varieties $X$ with high Picard numbers (see Corollary \ref{corl3.5}, Corollary \ref{corl3.8}). For instance, we prove the following:
\begin{corl}
$($Corollary \ref{thm3.9}$)$ Let $X$ be a smooth complex projective variety $X$ with $\overline{\Eff}^1(X) = \Nef^1(X)$ and $E_1,E_2,\cdots,E_k$ be finitely many semistable vector bundles on $X$ of ranks $r_1,r_2,\cdots,r_k$ respectively with $c_2\bigl(\End(E_i)\bigr) = 0$ for all $i \in \{1,2,\cdots,k\}$. Then
  \begin{center}
 $\Nef^1\bigl(\mathbb{P}_X(E_1) \times_X \mathbb{P}_X(E_2)\times_X\cdots\times_X \mathbb{P}_X(E_k)\bigr) = \overline{\Eff}^1\bigl(\mathbb{P}_X(E_1) \times_X \mathbb{P}_X(E_2)\times_X \cdots\times_X \mathbb{P}_X(E_k)\bigr).$
 \end{center}
\end{corl}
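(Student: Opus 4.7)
The plan is to proceed by induction on $k$. The base case $k = 1$ is immediate from Theorem~\ref{thm1.2}: since $E_1$ is semistable with $c_2(\End(E_1)) = 0$ and the hypothesis $\Nef^1(X) = \overline{\Eff}^1(X)$ is in force, the equivalence $(1)\Leftrightarrow(3)$ of Theorem~\ref{thm1.2} yields $\Nef^1\bigl(\mathbb{P}_X(E_1)\bigr) = \overline{\Eff}^1\bigl(\mathbb{P}_X(E_1)\bigr)$.

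For the inductive step, I would set $W := \mathbb{P}_X(E_1)\times_X\cdots\times_X\mathbb{P}_X(E_{k-1})$ with structure morphism $q : W \to X$, so that the inductive hypothesis gives $\Nef^1(W) = \overline{\Eff}^1(W)$. The structural observation that drives the induction is the canonical identification
\[
\mathbb{P}_X(E_1)\times_X\cdots\times_X\mathbb{P}_X(E_k) \;=\; W\times_X\mathbb{P}_X(E_k) \;=\; \mathbb{P}_W(q^* E_k),
\]
which realizes the full $k$-fold fibre product as a single projective bundle over $W$. The strategy is then to invoke Theorem~\ref{thm1.2} one more time, now with base $W$ and bundle $q^* E_k$; the Chern-class condition $c_2\bigl(\End(q^* E_k)\bigr) = q^* c_2\bigl(\End(E_k)\bigr) = 0$ transports for free by functoriality.

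The main obstacle is verifying that $q^* E_k$ remains semistable on $W$, since $W$ does not come equipped with a canonical polarization and pullback of semistable bundles by a smooth morphism is subtle in general. To handle this, I would use the numerical characterization underlying the paper's framework (going back to Miyaoka and Nakayama): a vector bundle $F$ on a smooth complex projective variety is semistable with $c_2(\End(F)) = 0$ if and only if the normalized tautological class $\lambda_F$ is nef on $\mathbb{P}(F)$. Since the identification above shows that $\lambda_{q^* E_k}$ is the pullback of $\lambda_{E_k}$ under the projection $\mathbb{P}_W(q^* E_k) \to \mathbb{P}_X(E_k)$, its nefness is automatic, and combined with the vanishing of $c_2(\End(q^* E_k))$ this forces $q^* E_k$ to be semistable with respect to a suitable polarization on $W$ (for instance $q^* H + \epsilon A$ with $H$ ample on $X$, $A$ relatively ample over $X$, and $\epsilon$ small). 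Theorem~\ref{thm1.2} then applies to $\mathbb{P}_W(q^* E_k)$ and yields the desired equality, closing the induction.
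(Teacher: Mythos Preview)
Your proof is correct and follows essentially the same inductive strategy as the paper: identify the $k$-fold fibre product as $\mathbb{P}_{X_{k-1}}(\psi_k^* E_k)$ and apply Theorem~\ref{thm1.2} once the pullback $\psi_k^* E_k$ is known to be semistable with $c_2(\End) = 0$. The only cosmetic difference is that the paper isolates the preservation of semistability under pullback as Lemma~\ref{lem3.2} (using the curve criterion (3) of Theorem~\ref{thm3.1}), whereas you argue it inline via the nefness criterion (2) of the same theorem; your remark about choosing a specific polarization on $W$ is in fact unnecessary, since the implication $(2)\Rightarrow(1)$ of Theorem~\ref{thm3.1} already delivers semistability outright.
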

We also give several examples to illustrate Theorem \ref{thm1.2}. In all these cases, weak Zariski decomposition always exists for pseudo-effective divisors as mentioned in our previous remark. In section 4, we prove the following :
\begin{thm}\label{thm1.4}
$($Theorem \ref{thm5.4}$)$ Let $E$ and $E'$ be two vector bundles of rank $m$ and $n$ respectively on a smooth complex projective curve $C$. Then weak Zariski decomposition exists for any pseudo-effective $\mathbb{R}$-divisors on the fibre product $\mathbb{P}_C(E)\times_C\mathbb{P}_C(E')$.
\end{thm}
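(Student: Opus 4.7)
The plan is to describe $\overline{\Eff}^{1}(Y)$ for $Y := \mathbb{P}_{C}(E) \times_{C} \mathbb{P}_{C}(E')$ explicitly as a simplicial $3$-dimensional cone, identify its three extremal rays, and then verify the existence of a weak Zariski decomposition on each extremal ray. This reduction is legitimate by Proposition $1$ of \cite{M-S-C}, since the classes admitting WZD form a cone inside $\overline{\Eff}^{1}(Y)$. Let $\pi_{Y} : Y \to C$ be the structure morphism, $p_{1}, p_{2}$ the two projections, and $\pi_{E}, \pi_{E'}$ the projections of $\mathbb{P}_{C}(E)$ and $\mathbb{P}_{C}(E')$ onto $C$. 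Put $H_{1} := p_{1}^{*} c_{1}(\mathcal{O}_{\mathbb{P}_{C}(E)}(1))$, $H_{2} := p_{2}^{*} c_{1}(\mathcal{O}_{\mathbb{P}_{C}(E')}(1))$, and $f := \pi_{Y}^{*}[\mathrm{pt}]$; these span $N^{1}(Y)_{\mathbb{R}}$.

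First I would compute $\overline{\Eff}^{1}(Y)$. For $D = aH_{1} + bH_{2} + cf$ with $a, b \in \mathbb{Z}_{\geq 0}$, the projection formula combined with flat base change yields
\[
\pi_{Y*}\mathcal{O}_{Y}(D) \;=\; \Sym^{a} E \otimes \Sym^{b} E' \otimes \mathcal{O}_{C}(c).
\]
The Ramanan--Ramanathan theorem in characteristic zero gives $\mu_{\max}(\Sym^{a} E \otimes \Sym^{b} E') = a\,\mu_{\max}(E) + b\,\mu_{\max}(E')$. If $D$ is pseudo-effective, then by Kodaira's lemma $D + \varepsilon A$ is big for every ample $A$ and every $\varepsilon > 0$, so $h^{0}(k(D+\varepsilon A))$ grows like $k^{\dim Y}$; restricting to fibers of $p_{2}$ and $p_{1}$ (each a product of projective spaces) forces $a \geq 0$ and $b \geq 0$, and the Harder--Narasimhan slope estimate on the pushforward on $C$, letting $\varepsilon \to 0$, forces $c + a\,\mu_{\max}(E) + b\,\mu_{\max}(E') \geq 0$. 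Conversely, the three classes $f$, $\xi_{1} := H_{1} - \mu_{\max}(E) f = p_{1}^{*}\bigl(c_{1}(\mathcal{O}_{\mathbb{P}_{C}(E)}(1)) - \mu_{\max}(E)\pi_{E}^{*}[\mathrm{pt}]\bigr)$, and $\xi_{2} := H_{2} - \mu_{\max}(E') f = p_{2}^{*}\bigl(c_{1}(\mathcal{O}_{\mathbb{P}_{C}(E')}(1)) - \mu_{\max}(E')\pi_{E'}^{*}[\mathrm{pt}]\bigr)$ are pseudo-effective, as they are pullbacks of the classical pseudo-effective extremal rays on the factors. Hence $\overline{\Eff}^{1}(Y) = \mathbb{R}_{\geq 0}\langle f, \xi_{1}, \xi_{2} \rangle$, a simplicial cone with precisely these three extremal rays.

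Next I would exhibit a WZD on each ray. The ray $\mathbb{R}_{\geq 0} f$ consists of nef classes, so the trivial decomposition $P = f$, $N = 0$ works on $Y$ itself. For the ray of $\xi_{1}$: by Nakayama \cite{N04} a WZD exists for every pseudo-effective class on $\mathbb{P}_{C}(E)$, so there is a projective bi-rational morphism $\mu : W \to \mathbb{P}_{C}(E)$ and a decomposition
\[
\mu^{*}\bigl(c_{1}(\mathcal{O}_{\mathbb{P}_{C}(E)}(1)) - \mu_{\max}(E)\pi_{E}^{*}[\mathrm{pt}]\bigr) = P_{0} + N_{0},
\]
with $P_{0}$ nef and $N_{0}$ effective on $W$. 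Form the fibre product $Z := Y \times_{\mathbb{P}_{C}(E)} W$ with projections $g : Z \to Y$ and $q : Z \to W$. Since $p_{1} : Y \to \mathbb{P}_{C}(E)$ is the projective bundle $\mathbb{P}_{\mathbb{P}_{C}(E)}(\pi_{E}^{*}E')$, the morphism $q$ is again a projective bundle, so $Z$ is normal and $g$ is projective and bi-rational. Then $g^{*}\xi_{1} = q^{*}(P_{0} + N_{0}) = q^{*}P_{0} + q^{*}N_{0}$, and pullback by $q$ preserves nefness and effectivity, yielding the required WZD of $\xi_{1}$ on $Z$. The ray $\xi_{2}$ is handled symmetrically via $p_{2}$.

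The main technical obstacle is the precise identification of $\overline{\Eff}^{1}(Y)$, in particular the upper bound excluding pseudo-effective classes from outside $\mathbb{R}_{\geq 0}\langle f, \xi_{1}, \xi_{2}\rangle$. The essential ingredients are the characteristic-zero additivity of $\mu_{\max}$ under tensor and symmetric powers (Ramanan--Ramanathan) and Kodaira's lemma, which together allow one to translate asymptotic $h^{0}$-growth on the base curve $C$ into (non-)pseudo-effectivity on $Y$.
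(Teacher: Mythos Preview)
Your argument is correct and takes a genuinely different route from the paper.

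The paper never writes down $\overline{\Eff}^1(Y)$ in closed form for arbitrary $E,E'$. Instead it inducts along the Harder--Narasimhan filtration of $E$ (and then of $E'$): blowing up $\mathbb{P}(\mathcal{Q}_1)\subset\mathbb{P}(\mathcal{E})$ yields a birational model that is simultaneously a projective bundle over $\mathbb{P}(\mathcal{E}_1)$, and the resulting cone map $C^{(1)}=\beta_*\eta^*$ is shown to be an isomorphism $\overline{\Eff}^1\bigl(\mathbb{P}(\mathcal{E}_1)\bigr)\xrightarrow{\sim}\overline{\Eff}^1\bigl(\mathbb{P}(\mathcal{E})\bigr)$ which moreover transports weak Zariski decompositions forward (Lemma~\ref{lem5.2}). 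Iterating, one eventually reaches the corank-one situation handled directly in Lemmas~\ref{lem5.3} and~\ref{lem5.4}, or the fully semistable Case~I.

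You short-circuit this induction: invoking additivity of $\mu_{\max}$ under tensor and symmetric powers in characteristic zero, you get $\overline{\Eff}^1(Y)=\mathbb{R}_{\geq 0}\langle f,\; H_1-\mu_{\max}(E)f,\; H_2-\mu_{\max}(E')f\rangle$ in one stroke for all $E,E'$ (and this does specialise to the formulas of Lemmas~\ref{lem5.3},~\ref{lem5.4}). The price is that you must import the existence of WZD on each factor $\mathbb{P}_C(E)$, $\mathbb{P}_C(E')$ as a black box from \cite{N04} or \cite{M-S-C}, whereas the paper's inductive scheme is self-contained in this respect. Your fibre-product transport of the factor WZD along $p_i$ is cleaner than the blow-up picture in Lemma~\ref{lem5.2}, but both exploit the same structural fact: the non-nef extremal rays of $\overline{\Eff}^1(Y)$ are pulled back from the factors. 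Two minor points: the inequality $c+a\mu_{\max}(E)+b\mu_{\max}(E')\geq 0$ already follows from the $H^0$ argument on effective integral classes by linearity and closure, so the appeal to Kodaira's lemma is unnecessary; and the fibres of $p_1,p_2$ are single projective spaces, not products.
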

Note that $\mathbb{P}_C(E)\times_C\mathbb{P}_C(E') \simeq \mathbb{P}_{\mathbb{P}(E')}(\pi_1^*E)$ where $\pi_1 : \mathbb{P}_C(E') \longrightarrow C$ is the projection map.  Here $\mathbb{P}_C(E')$ has Picard number 2 and $\Nef^1\bigl(\mathbb{P}_C(E)\times_C\mathbb{P}_C(E')\bigr)  \neq \overline{\Eff}^1\bigl(\mathbb{P}_C(E)\times_C\mathbb{P}_C(E')\bigr)$ unless both $E$ and $E'$ are semistable bundles on $C$ (see Corollary \ref{thm3.9}). Also, $\mathbb{P}_C(E)\times_C\mathbb{P}_C(E')$ is not a Mori dream space unless $C = \mathbb{P}^1$. In case $C = \mathbb{P}^1$, the fibre product $\mathbb{P}_C(E)\times_C\mathbb{P}_C(E')$ gives an example of a Bott tower of height 3. The proof of Theorem \ref{thm1.4} make use of some of the techniques in \cite{M-S-C}.

Like the cones of divisors, the closure of cones generated by effective $k$-cycles $\overline{\Eff}_k(X)$ and its dual $\Nef^k(X)$ are also of great importance for any projective variety $X$. It is not very easy to calculate these cones of higher co-dimensional cycles in higher dimensional varieties (see \cite{D-E-L-V}, \cite{Fu}, \cite{C-C}, \cite{C-L-O}). Let $E$ be an elliptic curve and $B = E^{\otimes n}$ be the $n$-fold product. Then $B$ is an abelian variety. In \cite{D-E-L-V}, it is shown that for any $ 2 \leq k \leq n$, there exists nef cycles of codimension $k$ in $\Nef^k(B)$  that are not pseudo-effective. Hence, the inclusion $\Nef^k(X) \subseteq \overline{\Eff}^k(X)$ for higher codimensional cycles $(2 \leq k \leq n)$ does not hold true in general. However, in \cite{Fu}, it is proved that a vector bundle $E$ of rank $r$ on a smooth complex projective curve $C$ is semistable if and only if it is $k$-homogeneous i.e.  $\overline{\Eff}^k\bigl(\mathbb{P}_C(E)\bigr) = \Nef^k\bigl(\mathbb{P}_{C}(E)\bigr)$ for all $k \in \{1,2,\cdots,r-1\}$. But there is an example of a rank 2 stable bundle $E$ on $\mathbb{P}^2$ (see Example 3.4 in \cite{Fu}) which is not $1$-homogeneous i.e. $\overline{\Eff}^1\bigl(\mathbb{P}_{\mathbb{P}^2}(E)\bigr) \neq \Nef^1\bigl(\mathbb{P}_{\mathbb{P}^2}(E)\bigr)$. We note that any semistable bundle $E$ on $\mathbb{P}^2$ with $c_2\bigl(\End(E)\bigr) = 0$ is trivial bundle up to some twist by a line bundle, and hence can never be stable.
Motivated by this observation, we prove the following in section 5:
\begin{thm}\label{thm1.5}
$($Theorem \ref{thm4.2}$)$ Let $E$ be a semistable vector bundle of rank $r \geq 2$ with $c_2\bigl(\End(E)\bigr) = 0$ on a smooth complex projective surface $X$ of Picard number $\rho(X) = 1$. We denote the numerical class of a fibre of the projection map $\pi : \mathbb{P}_X(E) \longrightarrow X$ by $F$. If $L_X \in N^1(X)_{\mathbb{R}}$ denotes the numerical class  of an ample generator of the N\'{e}ron-Severi group $N^1(X)_{\mathbb{Z}}$, then 
\begin{center}
$\overline{\Eff}^k\bigl(\mathbb{P}_X(E)\bigr) = \Bigl\{ a \lambda_E^k + b \lambda_E^{k-1}\pi^*L_X + c\lambda_E^{k-2}F \mid a,b,c \in \mathbb{R}_{\geq 0} \Bigr\}$ for all $k$ satisfying $ 1 < k < r$.
\end{center}
Moreover, $E$ is a $k$-homogeneous bundle on $X$ i.e., $\overline{\Eff}^k\bigl(\mathbb{P}_X(E)\bigr) = \Nef^k\bigl(\mathbb{P}_X(E)\bigr)$ for all $1 \leq k < r$.
\end{thm}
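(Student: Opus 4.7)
The plan is to describe $N^k(\mathbb{P}_X(E))_{\mathbb{R}}$ explicitly for $1<k<r$, pin down the intersection pairing $N^k\times N^{r+1-k}\to\mathbb{R}$, and then conclude by a duality argument. Since $X$ is a surface with $\rho(X)=1$, $N^*(X)_{\mathbb{R}}\cong\mathbb{R}[L_X]/(L_X^3)$, and the projective bundle formula realizes $N^*(\mathbb{P}_X(E))_{\mathbb{R}}$ as a free $N^*(X)_{\mathbb{R}}$-module. A key preliminary observation is that $c_2(\End(E))=0$ amounts to $c_2(E)=\tfrac{r-1}{2r}c_1(E)^2$, which forces the $\mathbb{Q}$-twisted bundle $E\otimes\det(E)^{-1/r}$ to have vanishing $c_1$ and $c_2$; its higher Chern classes vanish automatically on a surface. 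The tautological Chern relation then collapses to $\lambda_E^r=0$, so for $1<k<r$ the triple $\{\lambda_E^k,\,\lambda_E^{k-1}\pi^*L_X,\,\lambda_E^{k-2}F\}$ is a basis of $N^k(\mathbb{P}_X(E))_{\mathbb{R}}$ (the ambient variety has dimension $r+1$).

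Using $\lambda_E^r=0$, $F^2=0$, and $\pi_*\lambda_E^{r-1}=1$ (the $0$-th Segre class of the twisted bundle), one checks directly that the pairing between this basis and the analogous triple $\{\lambda_E^{r+1-k},\,\lambda_E^{r-k}\pi^*L_X,\,\lambda_E^{r-k-1}F\}$ of $N^{r+1-k}$ has an anti-diagonal matrix with positive entries $1,\,L_X^2,\,1$. Consequently the closed simplicial cones $C_k$ and $C_{r+1-k}$ generated by these two triples are dual to one another.

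The central step is to show that each of the three generators of $C_k$ is both nef and pseudoeffective. The classes $\lambda_E$ and $\pi^*L_X$ are nef by Theorem~\ref{thm1.2}, so for any irreducible $V\subset\mathbb{P}_X(E)$ of dimension $k$, Kleiman's positivity for mixed top intersections of nef $\mathbb{R}$-divisors forces $\lambda_E^k\cdot V\ge 0$ and $\lambda_E^{k-1}\pi^*L_X\cdot V\ge 0$. For $\lambda_E^{k-2}F$, I would represent $F$ by a general fibre $F_0\cong\mathbb{P}^{r-1}$ and split on $\dim\pi(V)$: the intersection is empty unless $V$ dominates $X$, in which case $F_0\cap V$ is a proper $(k-2)$-cycle in $\mathbb{P}^{r-1}$ on which $\lambda_E$ restricts to the ample class $\mathcal{O}(1)$, giving nonnegativity. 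Pseudoeffectivity of the first two classes follows from Fulton--Lazarsfeld, and $\lambda_E^{k-2}F$ is literally represented by a linear subspace of $F_0$. I expect the case analysis for nefness of $\lambda_E^{k-2}F$ to be the main technical obstacle.

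Once $C_k\subseteq\Nef^k\cap\overline{\Eff}^k$ is established, the identical argument in complementary codimension (still in the range $1<r+1-k<r$) gives $C_{r+1-k}\subseteq\Nef^{r+1-k}\cap\overline{\Eff}^{r+1-k}$. The standard duality $\Nef^k=(\overline{\Eff}^{r+1-k})^{\vee}$ combined with the pairing computation then forces $\Nef^k\subseteq C_{r+1-k}^{\vee}=C_k$ and $\overline{\Eff}^k\subseteq(\Nef^{r+1-k})^{\vee}=C_k$, hence $\Nef^k=\overline{\Eff}^k=C_k$ for $1<k<r$. The remaining case $k=1$ of the $k$-homogeneity statement is Theorem~\ref{thm1.2} applied to $X$, since $\rho(X)=1$ ensures $\overline{\Eff}^1(X)=\Nef^1(X)$.
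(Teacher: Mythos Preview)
Your approach is essentially the paper's: show $\lambda_E^r=0$, observe that the three generators in each codimension are complete intersections of nef divisors (hence simultaneously pseudoeffective and nonnegative on every effective cycle), and finish via the anti-diagonal duality pairing. Your planned case analysis for the nefness of $\lambda_E^{k-2}F$ is unnecessary: since $\rho(X)=1$, a point on the surface is numerically $\tfrac{1}{L_X^2}L_X^2$, so $F\equiv\tfrac{1}{L_X^2}(\pi^*L_X)^2$ and $\lambda_E^{k-2}F$ is already a positive multiple of an intersection of nef divisors, which is exactly how the paper handles it.
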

We also obtain similar results for $k$-homogeneous bundles on some smooth ruled surfaces (see Theorem \ref{thm4.3}).
Note that ruled surfaces have Picard number 2.
\subsection*{Acknowledgement}
The author would like to thank Indranil Biswas and Omprokash Das for many useful discussions. This work is supported financially by a postdoctoral fellowship from TIFR, Mumbai under DAE, Government of India.
\section{Preliminaries}
 \subsection{Nef cone and Pseudo-effective cone}
 Throughout this article, all the algebraic varieties are assumed to be irreducible.
 Let $X$ be a projective variety of dimension $n$ and $\mathcal{Z}_k(X)$ (respectively $\mathcal{Z}^k(X)$) denotes the free abelian group generated by $k$-dimensional (respectively $k$-codimensional) subvarieties on $X$. The Chow groups $A_k(X)$ are defined as the quotient of $\mathcal{Z}_k(X)$ modulo rational equivalence. When $X$ is a  smooth irreducible projective variety, we denote $A^k(X) := A_{n-k}(X)$. 
  
  Two cycles $Z_1$,$Z_2 \in \mathcal{Z}^k(X)$ are said to be numerically equivalent, denoted by $Z_1\equiv Z_2$ if $Z_1\cdot \gamma =  Z_2\cdot\gamma $ 
  for all $\gamma \in \mathcal{Z}_k(X)$. The \it numerical groups \rm  $ N^k(X)_{\mathbb{R}}$ are defined as the quotient of 
  $\mathcal{Z}^k(X) \otimes \mathbb{R}$ modulo numerical equivalence. When $X$ is smooth, we define 
  $N_k(X)_{\mathbb{R}} := N^{n-k}(X)_{\mathbb{R}}$ for all $k$. 
  
  Let 
  \begin{center}
  $\Div^0(X) := \bigl\{ D \in \Div(X) \mid D\cdot C = 0 $ for all curves $C$ in $X \bigr\} \subseteq \Div(X)$.
  \end{center}
 be the subgroup of $\Div(X)$ consisting of numerically trivial divisors. The quotient $\Div(X)/\Div^0(X)$ is called the N\'{e}ron Severi group of $X$, and is denoted by $N^1(X)_{\mathbb{Z}}$.
   The N\'{e}ron Severi group  $N^1(X)_{\mathbb{Z}}$ is a free abelian group of finite rank.
 Its rank, denoted by $\rho(X)$ is called the Picard number of $X$. In particular, $N^1(X)_{\mathbb{R}}$ is called the real N\'{e}ron 
 Severi group and $N^1(X)_{\mathbb{R}}  := N^1(X)_{\mathbb{Z}} \otimes \mathbb{R} := \bigl(\Div(X)/\Div^0(X)\bigr) \otimes \mathbb{R}$.
 For $X$ smooth, the intersection product induces a perfect pairing 
 \begin{align*}
N^k(X)_{\mathbb{R}} \times N_k(X)_{\mathbb{R}} \longrightarrow \mathbb{R}
\end{align*}
which implies $N^k(X)_{\mathbb{R}} \simeq (N_k(X)_{\mathbb{R}})^\vee$ for every $k$ satisfying $0\leq k \leq n$. The direct sum 
  $N(X) := \bigoplus\limits_{k=0}^{n}N^k(X)_{\mathbb{R}}$ is a graded $\mathbb{R}$-algebra with multiplication induced by the intersection form.
 
The convex cone generated by the set of all effective $k$-cycles in $N_k(X)_\mathbb{R}$ is denoted by $\Eff_k(X)$ and its closure $\overline{\Eff}_k(X)$ is called the \it pseudo-effective cone \rm  of $k$-cycles in $X$. For any $0 \leq k \leq n$,
$\overline{\Eff}^k(X) := \overline{\Eff}_{n-k}(X)$. The \it nef cone \rm  are defined as follows :
\begin{align*}
 \Nef^k(X) := \bigl\{ \alpha \in N^k(X) \mid \alpha \cdot \beta \geq 0 \hspace{2mm} \forall \beta \in \overline{\Eff}_k(X)\bigr\}.  
\end{align*}
An irreducible curve $C$ in $X$ is called \it movable \rm if there exists an algebraic family of irreducible curves $\{C_t\}_{t\in T}$ such 
that $C = C_{t_0}$ for some $t_0 \in T$ and $\bigcup_{t \in T} C_t \subset X$ is dense in $X$.

A class $\gamma \in N_1(X)_{\mathbb{R}}$ is called movable if there exists a movable curve $C$
such that $\gamma = [C]$ in $N_1(X)_{\mathbb{R}}$. The closure of the cone generated by movable classes in
$N_1(X)_{\mathbb{R}}$, denoted by $\overline{\ME}(X)$ is called the \it movable cone\rm. By \cite{BDPP13} $\overline{\ME}(X)$ is 
the dual cone to $\overline{\Eff}^1(X)$. Also, $\Nef^1(X) \subseteq \overline{\Eff}^1(X)$ always (see Ch 2 \cite{L1}). We refer the 
reader to \cite{L1},\cite{L2} for more details about these cones.
\subsection{Semistability of Vector bundles}
Let $X$ be a smooth complex projective variety of dimension $n$ with a fixed ample line bundle $H$ on it.
For a torsion-free coherent sheaf $\mathcal{G}$ of rank $r$ on $X$, the $H$-slope of $\mathcal{G}$ is defined as 
\begin{align*}
\mu_H(\mathcal{G}) := \frac{c_1(\mathcal{G})\cdot H^{n-1}}{r} \in \mathbb{Q}.
\end{align*}
A torsion-free coherent sheaf $\mathcal{G}$ on $X$ is said to be $H$-semistable if $\mu_H(\mathcal{F}) \leq \mu_H(\mathcal{G})$ for 
all subsheaves $\mathcal{F}$ of $\mathcal{G}$.
A vector bundle $E$ on $X$ is called $H$-unstable if it is not $H$-semistable. For every vector bundle $E$ on $X$, there is a unique filtration
\begin{align*}
 0 = E_k \subsetneq E_{k-1} \subsetneq E_{k-2} \subsetneq\cdots\subsetneq E_{1} \subsetneq E_0 = E
\end{align*}
of subbundles of $E$, called the Harder-Narasimhan filtration of $E$, such that $E_i/E_{i+1}$ is $H$-semistable torsion-free sheaf
for each $i \in \{ 0,1,2,\cdots,k-1\}$
and $\mu_H\bigl(E_{k-1}/E_{k}\bigr) > \mu_H\bigl(E_{k-2}/E_{k-1}\bigr) >\cdots> \mu_H\bigl(E_{0}/E_{1}\bigr)$.
We define $Q_1:= E_{0}/E_{1}$ and $\mu_{\min}(E) := \mu_H(Q_1) = \mu_H\bigl(E_{0}/E_{1}\bigr)$. 
We recall the following result from \cite{N99}  or  [Theorem 1.2,\cite{B-B}].
\begin{thm}\label{thm3.1}
 Let $E$ be a vector bundle of rank $r$ on a smooth complex projective variety $X$. Then the following are equivalent
 
 \rm(1) \it $E$ is semistable and $c_2\bigl(\End(E)\bigr) = 0$.
 
 \rm(2) \it $\lambda_E := c_1(\mathcal{O}_{\mathbb{P}(E)}(1)) - \frac{1}{r}\pi^*c_1(E) \in \Nef^1\bigl(\mathbb{P}(E)\bigr)$.
 
 \rm(3) \it For every pair of the form $(\phi,C)$, where $C$ is a smooth projective curve and $\phi : C \longrightarrow X$ is a non-constant morphism, $\phi^*(E)$ is semistable.
\end{thm}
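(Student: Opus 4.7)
The strategy is to prove the equivalence $(2) \Longleftrightarrow (3)$ directly using Miyaoka's theorem for curves, and then close the loop with $(1) \Longleftrightarrow (3)$ using restriction-type theorems. The central mechanism is the pullback compatibility of $\lambda_E$: for any non-constant morphism $\phi : C \to X$ from a smooth projective curve, the induced map $\widetilde{\phi} : \mathbb{P}(\phi^*E) \to \mathbb{P}(E)$ satisfies $\widetilde{\phi}^*\lambda_E = \lambda_{\phi^*E}$, since $c_1(\phi^*E) = \phi^*c_1(E)$ and $\mathcal{O}_{\mathbb{P}(\phi^*E)}(1) = \widetilde{\phi}^*\mathcal{O}_{\mathbb{P}(E)}(1)$ under flat base change.

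For $(2) \Longrightarrow (3)$, pulling $\lambda_E$ back along $\widetilde{\phi}$ produces a nef class $\lambda_{\phi^*E}$ on $\mathbb{P}(\phi^*E)$, and Miyaoka's theorem on curves then forces $\phi^*E$ to be semistable. For $(3) \Longrightarrow (2)$, I would test nefness of $\lambda_E$ against an arbitrary irreducible curve $C' \subset \mathbb{P}(E)$. If $C'$ lies in a fibre of $\pi$, then $\lambda_E|_{C'} = \mathcal{O}(1)|_{C'}$ is ample. Otherwise $C'$ is generically finite over $C := \pi(C')$; passing to the normalization $\nu : \widetilde{C} \to C$ and composing with the inclusion yields $\phi : \widetilde{C} \to X$, and $C'$ lifts to $\mathbb{P}(\phi^*E)$ after base change. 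Assumption $(3)$ combined with Miyaoka's curve theorem makes $\lambda_{\phi^*E}$ nef, so $\lambda_E \cdot C' \geq 0$ follows by the projection formula.

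For $(1) \Longrightarrow (3)$, I would appeal to the fact that a semistable bundle with vanishing discriminant $\Delta(E) := c_2(\End E) = 2r c_2(E) - (r-1)c_1(E)^2$ restricts to a semistable bundle on every smooth projective curve. This is a consequence of Mehta-Ramanathan-type restriction theorems together with the observation that under $\Delta(E) = 0$ any purported destabilization on a curve would force $\Delta(E) \cdot H^{n-2} > 0$ for some polarization, contradicting the Bogomolov inequality being tight. For the semistability half of $(3) \Longrightarrow (1)$, any destabilizing subsheaf of $E$ would, by Mehta-Ramanathan, destabilize the restriction $E|_C$ to a sufficiently general high-degree complete intersection curve $C$, contradicting $(3)$.

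The main obstacle is deriving $c_2(\End E) = 0$ from $(3)$ (or equivalently from $(2)$). The plan is to compute $\pi_*$ of a suitable polynomial in $\lambda_E$: a standard Segre class calculation shows that $\pi_*(\lambda_E^{r+1})$ is, up to a positive rational multiple, the discriminant class $\Delta(E) \in N^2(X)_{\mathbb{R}}$. Once $E$ is known to be semistable, the Bogomolov-Gieseker inequality gives $\Delta(E) \cdot H^{n-2} \geq 0$ for every ample $H$. If strict inequality held, one could construct, via Bogomolov's instability theorem applied to restrictions, a curve $\phi : C \to X$ on which $\phi^*E$ is unstable, contradicting $(3)$. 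The delicate part lies in performing this restriction argument uniformly, and it is precisely where the hypothesis $c_2(\End E) = 0$ acquires its geometric meaning; I expect this to be the main technical hurdle in writing out a fully self-contained proof.
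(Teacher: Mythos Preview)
The paper does not prove this statement at all: it is quoted as a known result, with the line ``We recall the following result from \cite{N99} or [Theorem 1.2, \cite{B-B}]'' immediately preceding it. So there is no in-paper proof to compare your proposal against; the intended argument is the one in Nakayama's note or in Biswas--Bruzzo.

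On the substance of your sketch: the equivalence $(2)\Longleftrightarrow(3)$ via pullback compatibility of $\lambda_E$ and Miyaoka's curve criterion is correct and is exactly the standard reduction. Likewise, the semistability halves of $(1)\Longleftrightarrow(3)$ via Mehta--Ramanathan are fine. The genuine content, as you correctly flag, is extracting $c_2(\End E)=0$ from $(2)$ or $(3)$. Your plan---compute $\pi_*(\lambda_E^{r+1})$ as a multiple of the discriminant $\Delta(E)$, then squeeze between nefness and Bogomolov--Gieseker---works cleanly when $\dim X=2$, because there $\Delta(E)$ is a number. In higher dimension it only yields $\Delta(E)\cdot H^{n-2}=0$ for every ample $H$, which does not by itself force $\Delta(E)=0$ in $N^2(X)_{\mathbb{R}}$; your fallback to ``Bogomolov's instability theorem applied to restrictions'' is too vague to close this. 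The references the paper cites handle this step differently: Biswas--Bruzzo pass through the equivalence with admitting a projectively flat Hermitian structure (via the Donaldson--Uhlenbeck--Yau / Simpson correspondence), from which the vanishing of all higher projective Chern classes, in particular $\Delta(E)=0$, is immediate. If you want a self-contained algebraic argument, the cleanest route is to first cut down by general hyperplane sections to a surface (using that semistability with $\Delta\cdot H^{n-2}=0$ is preserved under such restriction), where your Segre/Bogomolov squeeze then finishes the job.
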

\rm Since nefness of a line bundle does not depend on the fixed polarization (i.e. fixed ample line bundle $H$) on $X$, the Theorem \ref{thm3.1} implies that the semistability of a vector bundle $E$ with $c_2\bigl(\End(E)\bigr) = 0$ is independent of the fixed polarization $H$. We will not mention about the polarization $H$ from now on whenever we speak of semistability of such bundles $E$ with $c_2\bigl(\End(E)\bigr)=0$.
We have the following lemmata as easy applications of Theorem \ref{thm3.1}.
\begin{lem}\label{lem3.2}
 Let $\psi : X \longrightarrow Y$ be a morphism between two smooth complex projective varieties and $E$ is a semistable bundle on $Y$ with $c_2\bigl(\End(E)\bigr) = 0$. Then the pullback bundle $\psi^*(E)$ is also semistable with $c_2\bigl(\End(\psi^*E)\bigr) = 0$.
\begin{proof}
  Let $\phi : C \longrightarrow X$ be a non-constant morphism from a smooth curve $C$ to $X$. If the image of $\phi$ is contained in any fibre of $\psi$, then the pullback bundle $\phi^*\psi^*(E)$ is trivial, and hence semistable. Now let us assume that the image is not contained in any fibre of $\phi$. As $E$ is semistable bundle on $Y$ with $c_2\bigl(\End(E)\bigr) = 0$, by the previous Theorem \ref{thm3.1} the pullback bundle  $(\psi\circ\phi)^*E = \phi^*(\psi^*E)$ under the non-constant morphism $\psi\circ\phi$  is semistable on $C$. Hence $\psi^*E$ is semistable bundle on $X$ with $c_2\bigl(\End(\psi^*E)\bigr) = 0$.
 \end{proof}
\end{lem}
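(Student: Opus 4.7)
The plan is to verify condition (3) of Theorem \ref{thm3.1} for the bundle $\psi^*E$ on $X$, and then deduce both the semistability of $\psi^*E$ and the vanishing $c_2\bigl(\End(\psi^*E)\bigr)=0$ from the equivalence $(3)\Rightarrow(1)$. This is the most natural route because the characterization via non-constant maps from smooth curves is manifestly stable under composition, whereas working directly with slopes would force me to choose a polarization on $X$ (and compare it with one on $Y$) and then chase subsheaves, which is more cumbersome for an arbitrary morphism $\psi$.

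Concretely, I would let $\phi:C\longrightarrow X$ be an arbitrary non-constant morphism from a smooth projective curve $C$, and I would look at the composition $\psi\circ\phi:C\longrightarrow Y$. There are two cases to treat. In the good case where $\psi\circ\phi$ is non-constant, Theorem \ref{thm3.1} applied to $E$ on $Y$ together with the pair $(\psi\circ\phi,C)$ gives immediately that $(\psi\circ\phi)^*E=\phi^*(\psi^*E)$ is semistable on $C$. The case that one has to be a little careful about is when $\psi\circ\phi$ happens to be constant, i.e.\ when $\phi(C)$ lies inside a fibre of $\psi$. In that situation $(\psi\circ\phi)^*E$ is the pullback of a vector bundle under a constant map and is therefore trivial, and the trivial bundle on $C$ is tautologically semistable. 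So in both cases $\phi^*(\psi^*E)$ is semistable on $C$.

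Having verified condition (3) of Theorem \ref{thm3.1} for the bundle $\psi^*E$ on $X$, the implication $(3)\Rightarrow(1)$ in the same theorem gives simultaneously that $\psi^*E$ is semistable and that $c_2\bigl(\End(\psi^*E)\bigr)=0$, which is exactly what the lemma asserts.

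I do not expect any serious obstacle: the real content is packaged inside Theorem \ref{thm3.1}, and the lemma is in essence the observation that the curve-test characterization in (3) is evidently functorial under pullback, with only the mildly degenerate contracted-to-a-point case requiring a separate (trivial) argument. The one place where one should be slightly vigilant is the second Chern class statement; attempting to prove it by a direct cycle-theoretic pullback of $c_2\bigl(\End(E)\bigr)$ would be awkward because numerical vanishing on $Y$ need not imply numerical vanishing of the pullback on $X$ for general classes, so it is important that we extract this vanishing from the equivalence in Theorem \ref{thm3.1} rather than from a formal Chern-class calculation.
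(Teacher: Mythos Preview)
Your proposal is correct and follows essentially the same route as the paper's own proof: both verify condition (3) of Theorem \ref{thm3.1} for $\psi^*E$ by composing an arbitrary non-constant curve map $\phi:C\to X$ with $\psi$, treat separately the degenerate case where $\phi(C)$ lies in a fibre of $\psi$ (so the pullback is trivial), and then invoke $(3)\Rightarrow(1)$ to get both semistability and the vanishing of $c_2\bigl(\End(\psi^*E)\bigr)$. Your write-up is slightly more explicit about why one should extract the Chern-class vanishing from the equivalence rather than from a naive pullback computation, but the argument is the same.
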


\begin{lem}\label{lem3.3}
 Let  $E$ be a semistable vector bundle of rank $r$ on a smooth complex projective variety $X$ with $c_2\bigl(\End(E)\bigr) = 0$. Then for any positive integer $m$ and any line bundle $\mathcal{L}$ on $X$, $\Sym^m(E)\otimes \mathcal{L}$ is also semistable bundle on $X$ with $c_2\bigl(\End(\Sym^m(E)\otimes \mathcal{L})\bigr) = 0$.
 \begin{proof}
  It is enough to prove that for any smooth complex projective curve $C$ and any non-constant map $\phi : C \longrightarrow X$, the pullback bundle $\phi^*\bigl(\Sym^m(E)\otimes \mathcal{L}\bigr)$ is semistable bundle on $C$. We note that
  \begin{center}
   $\phi^*\bigl(\Sym^m(E)\otimes \mathcal{L}\bigr) = \Sym^m(\phi^*E)\otimes\phi^*(\mathcal{L})$
  \end{center}
As $E$ is a semistable bundle on $X$ with $c_2\bigl(\End(E)\bigr) = 0$, by Theorem \ref{thm3.1} we have $\phi^*E$ is semistable. Hence 
$\Sym^m(\phi^*E)\otimes\phi^*(\mathcal{L})$ is also semistable. This proves the Lemma.
\end{proof}
\end{lem}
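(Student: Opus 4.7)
The plan is to use Theorem \ref{thm3.1} as the main device: since the conclusion of the lemma is again of the form ``$F$ is semistable with $c_2(\End(F))=0$,'' it suffices to check that for every smooth complex projective curve $C$ and every non-constant morphism $\phi : C \longrightarrow X$, the pullback $\phi^*\bigl(\Sym^m(E)\otimes \mathcal{L}\bigr)$ is semistable on $C$. This reduces a problem on an arbitrary smooth projective variety $X$ to a problem on curves, where slope-semistability is much better behaved.

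The first step after this reduction is to rewrite the pullback using the compatibility of $\phi^*$ with the symmetric product and tensor product of sheaves, giving the isomorphism
\begin{equation*}
\phi^*\bigl(\Sym^m(E)\otimes \mathcal{L}\bigr) \;\cong\; \Sym^m(\phi^*E)\otimes\phi^*\mathcal{L}.
\end{equation*}
Now I bring in the hypothesis: by Theorem \ref{thm3.1} applied to $E$, the pullback $\phi^*E$ is semistable on $C$. The question therefore becomes purely one about bundles on a single curve: namely, to show that if $F$ is a semistable bundle on a smooth complex projective curve and $\mathcal{M}$ is any line bundle, then $\Sym^m(F)\otimes \mathcal{M}$ is semistable.

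This last point is the only nontrivial step, and it is where characteristic $0$ enters essentially. I would invoke the classical fact, due to Narasimhan--Seshadri and Ramanan--Ramanathan, that in characteristic $0$ the tensor product of semistable bundles on a smooth projective curve is semistable; in particular the $m$-th symmetric power $\Sym^m(F)$ is a quotient (and summand, via the symmetrizer in characteristic $0$) of $F^{\otimes m}$ and hence semistable. Twisting by the line bundle $\phi^*\mathcal{M}$ merely shifts the slope by $\deg(\phi^*\mathcal{M})$ and so preserves semistability. Combining these, $\phi^*\bigl(\Sym^m(E)\otimes \mathcal{L}\bigr)$ is semistable on every such curve $C$, and applying the reverse direction of Theorem \ref{thm3.1} yields that $\Sym^m(E)\otimes\mathcal{L}$ is semistable on $X$ with $c_2\bigl(\End(\Sym^m(E)\otimes\mathcal{L})\bigr) = 0$. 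The main obstacle is really hidden in this last appeal: the non-trivial content is the preservation of semistability under symmetric powers on curves over $\mathbb{C}$, which is the input that makes the curve-by-curve strategy succeed.
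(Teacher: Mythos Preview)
Your proof is correct and follows essentially the same route as the paper: reduce via Theorem~\ref{thm3.1} to curves, pull back to get $\Sym^m(\phi^*E)\otimes\phi^*\mathcal{L}$, use semistability of $\phi^*E$, and conclude. You are in fact more explicit than the paper, which simply asserts that $\Sym^m(\phi^*E)\otimes\phi^*\mathcal{L}$ is semistable without naming the characteristic-zero input (preservation of semistability under symmetric powers and line-bundle twists on curves) that you correctly identify.
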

The projective bundle $\mathbb{P}_X(E)$ associated to a vector bundle 
$E$ over a projective variety $X$ is defined as $\mathbb{P}_X(E) :=  \Proj\bigl(\bigoplus\limits_{m\geq  0} \Sym^m(E)\bigr)$.
We will simply write $\mathbb{P}(E)$ whenever the base space $X$ is clear from the context. 

\section{Equality of Nef and Pseudo-effective cones} 
We note that for a vector $E$ on a smooth irreducible projective complex variety $X$, the equality
$\Nef^1\bigl(\mathbb{P}(E)\bigr) = \overline{\Eff}^1\bigl(\mathbb{P}(E)\bigr)$ always implies the equality $\Nef^1(X) = \overline{\Eff}^1(X)$. However, 
the converse is not true in general (see section 5, \cite{B-H-P}). In this section, we prove that 
$\Nef^1\bigl(\mathbb{P}(E)\bigr) = \overline{\Eff}^1\bigl(\mathbb{P}(E)\bigr)$ if and only if $\Nef^1(X) = \overline{\Eff}^1(X)$ under the assumption that
$E$ is a semistable bundle on $X$ with $c_2\bigl(\End(E)\bigr) = 0$ (cf Theorem \ref{thm1.2}). We also give several examples, especially on 
abelian varieties and fibre products of projective bundles to illustrate our results.
\begin{proof}[Proof of Theorem \ref{thm1.2}]
\underline{(1)$\Rightarrow$ (2)} Let $D$ be an effective divisor on $\mathbb{P}(E)$ such that \\
$\mathcal{O}_{\mathbb{P}(E)}(D) \simeq \mathcal{O}_{\mathbb{P}(E)}(m) \otimes \pi^*(\mathcal{L})$ for some integer $m$ and a line bundle $\mathcal{L} \in \Pic(X)$. Then 
 \begin{align*}
  H^0\bigl(\mathbb{P}(E), \mathcal{O}_{\mathbb{P}(E)}(m) \otimes \pi^*(\mathcal{L})\bigr) = H^0\bigl(X, \Sym^m(E) \otimes \mathcal{L}\bigr) \neq 0 ,
 \end{align*}
 which implies $m \geq 0$. 
 
Let $\gamma = [C] $ be a movable class in $N_1(X)_{\mathbb{R}}$. Then $C$ belongs to an algebraic family of curves
$\{C_t\}_{t\in T}$ such that  $\bigcup_{t \in T} C_t$ covers a dense subset of $X$. So we can find a curve
$C_{t_1}$ in this family such that  
\begin{align*}
 H^0\bigl( C_{t_1}, \Sym^m(E)\vert_{C_{t_1}} \otimes \mathcal{L}\vert_{C_{t_1}}\bigr) \neq 0.
\end{align*}

Let $\eta_{t_1} :\tilde{C_{t_1}} \longrightarrow C_{t_1}$ be the normalization of the curve $C_{t_1}$ and we call $\phi_{t_1} :=  i\circ \eta_{t_1} $ where $i : C_{t_1} \hookrightarrow X$ is the inclusion.

As $E$ is a semistable bundle on $X$ with $c_2\bigl(\End(E)\bigr) = 0$, $\Sym^m(E) \otimes \mathcal{L}$ is also semistable on $X$ and $c_2\bigl(\Sym^m(E) \otimes \mathcal{L}\bigr) = 0$ by the above Lemma \ref{lem3.3}. Therefore  by Theorem \ref{thm3.1} we have $\phi_{t_1}^*\bigl(\Sym^m(E) \otimes \mathcal{L}\bigr)$ is also semistable on $\tilde{C_{t_1}}$. 

Since $\eta_{t_1}$ is a surjective map, we have
\begin{align*}
 H^0\bigl(\eta_{t_1}^*(\Sym^m(E)\vert_{C_{t_1}} \otimes \mathcal{L}\vert_{C_{t_1}})\bigr)  = H^0\bigl(\phi_{t_1}^*(\Sym^m(E) \otimes \mathcal{L})\bigr)\neq 0.
\end{align*}
This implies that $c_1\bigl(\phi_{t_1}^*\bigl(\Sym^m(E) \otimes \mathcal{L}\bigr)\bigr) \geq 0$ , and hence

$c_1\bigl(\Sym^m(E) \otimes \mathcal{L}\bigr)\cdot C_{t_1} = \bigl\{ c_1\bigl(\Sym^m(E) \otimes \mathcal{L}\bigr) \cdot \gamma \bigr\} \geq 0$ for a movable class $\gamma \in N_1(X)_{\mathbb{R}}$.

Using the duality property of movable cone $\overline{\ME}(X)$ we conclude that
\begin{center}
$ c_1\bigl(\Sym^m(E) \otimes \mathcal{L}\bigr) = c_1\bigl(\pi_*\mathcal{O}_{\mathbb{P}(E)}(D)\bigr) \in \overline{\Eff}^1(X) = \Nef^1(X)$.
\end{center}
\underline{(2) $\Rightarrow$ (3)} We will show that every effective divisor $D$ in $\mathbb{P}(E)$ is nef. Let $\mathcal{O}_{\mathbb{P}(E)}(D) \simeq \mathcal{O}_{\mathbb{P}(E)}(m) \otimes \pi^*(\mathcal{L})$ for some positive integer $m$ and a line bundle $\mathcal{L} \in \Pic(X)$.

 Now 
 $c_1\bigl(\pi_*\mathcal{O}_{\mathbb{P}(E)}(D)\bigr)$
 = $c_1\bigl(\Sym^m(E)\otimes \mathcal{L}\bigr) = \rank\bigl(\Sym^m(E)\bigr)\bigl\{ \frac{m}{r}c_1(E) + c_1(\mathcal{L}) \bigr\} \in \Nef^1(X)$, so that $\frac{m}{r}c_1(E) + c_1(\mathcal{L}) \in \Nef^1(X)$. Since $E$ is a semistable bundle on $X$ with $c_2\bigl(\End(E)\bigr) = 0$, by Theorem \ref{thm3.1} we also have $c_1\bigl(\mathcal{O}_{\mathbb{P}(E)}(1)\bigr) - \frac{1}{r}\pi^*c_1(E) \in \Nef^1\bigl(\mathbb{P}(E)\bigr)$.
 
Hence,
$\mathcal{O}_{\mathbb{P}(E)}(m) \otimes \pi^*(\mathcal{L})$
$\equiv m\bigl(c_1(\mathcal{O}_{\mathbb{P}(E)}(1))\bigr) + \pi^*c_1(\mathcal{L})$

$\equiv m\bigl\{ c_1(\mathcal{O}_{\mathbb{P}(E)}(1)) - \frac{1}{r}\pi^*c_1(E)\bigr\} + \frac{m}{r}\bigl(\pi^*c_1(E) + \pi^*c_1(\mathcal{L})\bigr)$

$\equiv m\bigl\{ c_1(\mathcal{O}_{\mathbb{P}(E)}(1)) - \frac{1}{r}\pi^*c_1(E)\bigr\} + \pi^*\bigl(\frac{m}{r}c_1(E) + c_1(\mathcal{L})\bigr) \in \Nef^1\bigl(\mathbb{P}(E)\bigr)$.

Therefore, $\Eff^1\bigl(\mathbb{P}(E)\bigr) \subset \Nef^1\bigl(\mathbb{P}(E)\bigr) \subset \overline{\Eff}^1\bigl(\mathbb{P}(E)\bigr)$. Taking closure, we get $\Nef^1\bigl(\mathbb{P}(E)\bigr) = \overline{\Eff}^1\bigl(\mathbb{P}(E)\bigr)$.

\underline{(3) $\Rightarrow$ (1)}
We claim that $\Nef^1(X) = \overline{\Eff}^1(X)$. If not then there is an effective divisor $D$ in $X$ which is not nef. Therefore the pullback $\pi^*(D)$ is also effective. Since $\pi$ is a surjective proper morphism, $\pi^*(D)$ can never be nef, which contradicts that $\Nef^1\bigl(\mathbb{P}(E)\bigr) = \overline{\Eff}^1\bigl(\mathbb{P}(E)\bigr).$ Hence we are done.
\vspace{2mm}

In addition, let $\Nef^1(X) = \overline{\Eff}^1(X)$ be a finite polyhedron generated by nef classes $L_1,L_2,\cdots,L_k$. Then for any effective divisor $D$ on $\mathbb{P}(E)$ such that $\mathcal{O}_{\mathbb{P}(E)}(D) \simeq \mathcal{O}_{\mathbb{P}(E)}(m) \otimes \pi^*(\mathcal{L})$ for some positive integer $m$ and $\mathcal{L} \in \Pic(X)$, we have $\frac{m}{r}c_1(E) + c_1(\mathcal{L}) \in \Nef^1(X)$. Let $\frac{m}{r}c_1(E) + c_1(\mathcal{L}) \equiv x_1L_1 + x_2L_2+\cdots+x_kL_k$ (say) for some non-negative real numbers $x_i$'s. The above calculation then shows that 

$\mathcal{O}_{\mathbb{P}(E)}(m) \otimes \pi^*(\mathcal{L}) \equiv m\lambda_E + x_1\pi^*L_1 + x_2\pi^*L_2+\cdots+x_k\pi^*L_k$. 
This shows that
\begin{align*}
 \Eff^1\bigl(\mathbb{P}(E)\bigr) \subseteq \Bigl\{y_0\lambda_E + y_1\pi^*L_1+\cdots+y_k\pi^*L_k\mid y_i \in \mathbb{R}_{\geq 0}\Bigr\} \subseteq \Nef^1\bigl(\mathbb{P}(E)\bigr).
\end{align*}
Therefore
\begin{align*}
\Nef^1\bigl(\mathbb{P}(E)\bigr) = \overline{\Eff}^1\bigl(\mathbb{P}(E)\bigr) = \Bigl\{y_0\lambda_E + y_1\pi^*L_1+\cdots+y_k\pi^*L_k\mid y_i \in \mathbb{R}_{\geq 0}\Bigr\}.
\end{align*}
\end{proof}
\begin{corl}\label{corl3.5}
Let $X$ be a smooth complex projective variety of dimension $n$ with Picard number $\rho(X) = 1$ and $E$ be a semistable vector bundle of rank $r$ on $X$ with $c_2\bigl(\End(E)\bigr) = 0$. Then
\begin{center}
$\Nef^1\bigl(\mathbb{P}(E)\bigr) = \overline{\Eff}^1\bigl(\mathbb{P}(E)\bigr) = \Bigl\{ x_0\lambda_E + x_1(\pi^*L_X) \mid x_0,x_1 \in \mathbb{R}_{\geq 0}\Bigr\}$,
\end{center}
 where $L_X$ is the numerical class of an ample generator of the Ne\'{r}on-Severi group $N^1(X)_{\mathbb{Z}}\simeq \mathbb{Z}$.
 \begin{proof}
 Let $D$ be an effective divisor on $X$ and $D\equiv mL_X$ for some $m\in \mathbb{Z}$. Then $L_X^{n-1}\cdot D = mL_X^{n} \geq 0$. As $L_X$ is ample, by Nakai criterion for ampleness we have $L_X^{n} > 0$. This shows that $m \geq 0$ and $D \equiv mL_X \in \Nef^1(X)$. 
So $\Nef^1(X) = \overline{\Eff}^1(X)$ and hence the result follows from Theorem \ref{thm1.2}.
\end{proof}
\end{corl}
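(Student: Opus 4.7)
The plan is to reduce this corollary directly to Theorem \ref{thm1.2}. Since $E$ is semistable with $c_2(\End(E))=0$, all the hypotheses of Theorem \ref{thm1.2} are satisfied except possibly the condition $\Nef^1(X)=\overline{\Eff}^1(X)$. So the only thing I really need to verify by hand is that condition (1) of Theorem \ref{thm1.2} holds whenever $\rho(X)=1$, and then invoke the ``In addition'' clause with $k=1$ and $L_1=L_X$ to read off the explicit description.

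First I would observe that since $\rho(X)=1$, the group $N^1(X)_{\mathbb{R}}$ is one-dimensional, spanned by the ample class $L_X$. Hence every divisor class on $X$ is numerically of the form $mL_X$ for some $m\in\mathbb{R}$. To show $\Nef^1(X)=\overline{\Eff}^1(X)$ it suffices to show every effective divisor $D$ with $D\equiv mL_X$ has $m\ge 0$, for then $D$ lies on the nef ray spanned by $L_X$. This is a short computation: intersecting with $L_X^{n-1}$ gives $D\cdot L_X^{n-1}=mL_X^n$; the left-hand side is non-negative because $D$ is effective and $L_X^{n-1}$ is a positive multiple of the class of an ample complete intersection curve, while $L_X^n>0$ by the Nakai--Moishezon criterion applied to the ample class $L_X$. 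Hence $m\ge 0$, which forces $\Eff^1(X)\subseteq\Nef^1(X)$, and after taking closures $\overline{\Eff}^1(X)=\Nef^1(X)$, both equal to the ray $\mathbb{R}_{\ge 0}L_X$.

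Now I would apply Theorem \ref{thm1.2}. The equivalence (1)$\Leftrightarrow$(3) gives
\[
\Nef^1\bigl(\mathbb{P}(E)\bigr)=\overline{\Eff}^1\bigl(\mathbb{P}(E)\bigr),
\]
and the supplementary statement of Theorem \ref{thm1.2}, applied with the single generator $L_1=L_X$ of the polyhedral cone $\Nef^1(X)$, yields the explicit description
\[
\Nef^1\bigl(\mathbb{P}(E)\bigr)=\overline{\Eff}^1\bigl(\mathbb{P}(E)\bigr)=\bigl\{x_0\lambda_E+x_1\pi^*L_X\mid x_0,x_1\in\mathbb{R}_{\ge 0}\bigr\},
\]
which is exactly the claimed formula.

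There is essentially no obstacle here: the only non-trivial ingredient is Theorem \ref{thm1.2}, and the reduction simply consists of checking the Picard-rank-one hypothesis implies $\Nef^1(X)=\overline{\Eff}^1(X)$, which is standard. The mildly subtle point worth flagging is that I am using $L_X^n>0$ (Nakai) rather than merely $L_X^{n-1}$ being a class of a curve, since in principle $L_X^{n-1}$ need not be the class of an actual movable curve; but for the numerical inequality $D\cdot L_X^{n-1}\ge 0$ for effective $D$ one can instead argue by moving $L_X^{n-1}$ to a complete intersection of very ample divisors (replacing $L_X$ by a large multiple if necessary), which meets $D$ non-negatively.
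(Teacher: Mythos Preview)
Your proof is correct and follows essentially the same route as the paper: you verify $\Nef^1(X)=\overline{\Eff}^1(X)$ by intersecting an effective divisor $D\equiv mL_X$ with $L_X^{n-1}$ and using $L_X^n>0$, then invoke Theorem~\ref{thm1.2} (including its ``In addition'' clause with the single generator $L_X$). The paper's proof is virtually identical, just slightly terser in justifying $D\cdot L_X^{n-1}\ge 0$.
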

\begin{xrem}\label{remark1}
\rm Miyaoka [Theorem 3.1,\cite{Mi}] proved that a vector bundle $E$ of rank $r$ on a smooth complex projective curve $C$ is semistable if and only if 
\begin{align*}
\Nef^1\bigl(\mathbb{P}(E)\bigr) = \overline{\Eff}^1\bigl(\mathbb{P}(E)\bigr) = \bigl\{ x\lambda_E + y\pi^*\mathcal{L} \mid x,y \in \mathbb{R}_{\geq 0}\bigr\} = \bigl\{ x(\zeta - \mu(E)f) + yf \mid x,y \in \mathbb{R}_{\geq 0}\bigr\},
\end{align*}
where $\zeta = c_1\bigl(\mathcal{O}_{\mathbb{P}(E)}(1)\bigr)$, $f$ is the numerical class of a fibre of $\pi : \mathbb{P}(E)\longrightarrow C$ and $\mathcal{L}$ is an ample generator of $N^1(C)_{\mathbb{Z}} \simeq \mathbb{Z}$. The smooth curve $C$ has Picard number 1 and any second Chern class vanishes on $C$. So the Corollary \ref{corl3.5} can be thought as a partial generalization of Miyaoka's result in higher dimensions. 
\end{xrem}
\begin{exm}\label{corl3.6}
\rm Let $X$ be a smooth projective variety with Picard number $\rho(X) = 1$, and $E = \mathcal{L}_1\oplus \mathcal{L}_2$ be a rank two bundle on $X$ such that $\mathcal{L}_1\cdot L_X = \mathcal{L}_2\cdot L_X$ (Here $L_X$ is the ample generator for $N^1(X)_{\mathbb{Z}}$). Then $E$ is semistable with $c_2\bigl(\End(E)\bigr) = 4c_2(E) - c_1^2(E) = 0$. Therefore, $\Nef^1\bigl(\mathbb{P}(\mathcal{L}_1\oplus \mathcal{L}_2)\bigr) = \overline{\Eff}^1\bigl(\mathbb{P}(\mathcal{L}_1\oplus \mathcal{L}_2)\bigr).$
\end{exm}
\begin{exm}\label{exm3.7}
 \rm Let $X$ be a smooth complex projective variety with Picard number 1 and $q = H^1(X,\mathcal{O}_X) \neq 0$. Then for any line bundle $L$ on $X$, there is a nonsplit extension 
  \begin{align*}
 0 \longrightarrow L \longrightarrow E \longrightarrow L \longrightarrow 0.
 \end{align*}
In this case, $E$ is a semistable bundle of rank 2 with $c_2\bigl(\End(E)\bigr) = 0$. Moreover, for any positive integer $r$, the vector bundles of the forms $E^{\oplus r} \oplus L$ and $E^{\oplus r}$ are examples of semistable bundles 
of ranks $2r+1$ and $2r$ respectively with $c_2\bigl(\End(E^{\oplus r} \oplus L)\bigr) = c_2\bigl(\End(E^{\oplus r})\bigr) = 0$. In all these cases, nef cone and pseudo-effective cones of divisors coincide.
\end{exm}
 \begin{exm}\label{exm3.8}
  \rm Let $G$ be a connected algebraic group acting transitively on a complex projective variety $X$. Then every effective divisor on $X$ is nef, i.e. $\Nef^1(X) = \overline{\Eff}^1(X)$. This is because any irreducible curve $C\subset X$ meets the translate $gD$ of an effective divisor $D$ properly for a general element $g \in G$. Since $G$ is connected, $gD \equiv D$. Therefore $D\cdot C = gD \cdot C \geq 0$, and hence $D$ is nef.  Examples of such homogeneous varieties $X$ include smooth abelian varieties, flag manifolds etc. So for any semistable vector bundle $E$ on such a homogeneous space $X$ with $c_2\bigl(\End(E)\bigr) = 0$, using Theorem \ref{thm1.2} we have $\Nef^1\bigl(\mathbb{P}(E)\bigr) = \overline{\Eff}^1\bigl(\mathbb{P}(E)\bigr)$. For example, let $C$ be a general elliptic curve and $X = C\times C$ be the self product. Then $X$ is an abelian surface and $\Nef^1(X)$ is a non-polyhedral cone (see Lemma 1.5.4 in \cite{L1}). Let $p_i : X \longrightarrow C$ be the projection maps. For any semistable vector bundle $F$ of rank $r$ on $C$, the pullback bundle $E:= p_i^*(F)$ is a semistable bundle with $c_2\bigl(\End(E)\bigr) = 0$. Hence $\Nef^1\bigl(\mathbb{P}(E)\bigr) = \overline{\Eff}^1\bigl(\mathbb{P}(E)\bigr)$.
 \end{exm}
 \begin{exm}
\rm The above example can be extended as follows. Let $B$ be any smooth curve and $C$ be an elliptic curve. Then the product of $C$ with the Jacobian variety of $B$ i.e.,  $C \times J(B)$ is an abelian variety. For any semistable vector bundle $F$ on $C$, the pullback bundle $E := p_1^*(F)$  under the 1st projection $p_1$, is a semistable bundle with $c_2\bigl(\End(E)\bigr) = 0$. Hence $\Nef^1\bigl(\mathbb{P}(E)\bigr) = \overline{\Eff}^1\bigl(\mathbb{P}(E)\bigr)$.
 \end{exm}

 A vector bundle $E$ on an abelian variety $X$ is called weakly-translation invariant (semi-homogeneous in the sense of Mukai) if 
 for every closed point $x \in X$, there is a line bundle $L_x$ on $X$ depending on $x$ such that $T_x^*(E) \simeq E \otimes L_x$ for all $x\in X$, where $T_x$ is the translation morphism given by $x\in X$.
\begin{corl}\label{corl3.8}
 Let $E$ be a semi-homogeneous vector bundle of rank $r$ on an abelian variety $X$. Then $\Nef^1\bigl(\mathbb{P}(E)\bigr) = \overline{\Eff}^1\bigl(\mathbb{P}(E)\bigr)$.
 \begin{proof}
  By a result due to Mukai, $E$ is Gieseker semistable (see Ch1 \cite{HL10} for definition) with respect to some polarization and it has projective Chern classes zero, i.e., if $c(E)$ is the total Chern class, then $c(E) = \Bigl\{ 1+ c_1(E)/r\Bigr\}^r$ (see p. 260, Theorem 5.8 \cite{Muk78}, p. 266, Proposition 6.13 \cite{Muk78}; also see p. 2 \cite{MN84}). Gieseker semistablity implies slope semistability (see \cite{HL10}). So, in particular, we have $E$ is slope semistable with $c_2\bigl(\End(E)\bigr) = 2rc_2(E) - (r -1)c_1^2(E) = 0$. Hence the result follows.
 \end{proof}
\end{corl}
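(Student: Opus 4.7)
The plan is to reduce the statement to Theorem \ref{thm1.2} by verifying its two hypotheses: first, that $\overline{\Eff}^1(X) = \Nef^1(X)$ holds on the base abelian variety $X$, and second, that the semi-homogeneous bundle $E$ is slope semistable with $c_2(\End(E)) = 0$. The first hypothesis is immediate from Example \ref{exm3.8}, since an abelian variety is a homogeneous space under its own translation action; every effective divisor is numerically translation-invariant, hence meets every irreducible curve non-negatively.

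For the semistability and Chern class conditions, I would invoke Mukai's structure theory for semi-homogeneous bundles. The relevant facts are that every semi-homogeneous $E$ on an abelian variety is Gieseker semistable with respect to some polarization, and that its total Chern class satisfies $c(E) = \bigl(1 + c_1(E)/r\bigr)^r$ (so all ``projective'' Chern classes vanish). Gieseker semistability then implies slope semistability, while expanding the binomial gives $c_1(E)^2 = \binom{r}{2}^{-1}\bigl(\text{stuff}\bigr)$; more directly, one reads off $c_2(E) = \tfrac{r-1}{2r}c_1(E)^2$, which yields
\begin{align*}
c_2(\End(E)) \;=\; 2r\,c_2(E) - (r-1)\,c_1(E)^2 \;=\; 0.
\end{align*}

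With both hypotheses of Theorem \ref{thm1.2} verified, the equivalence (1)$\Leftrightarrow$(3) in that theorem immediately yields $\Nef^1(\mathbb{P}(E)) = \overline{\Eff}^1(\mathbb{P}(E))$, completing the proof. The main obstacle here is not analytical but bibliographical: one needs to locate and cite the correct statements in Mukai's original paper that package semi-homogeneity into the clean conclusion ``Gieseker semistable with vanishing projective Chern classes.'' Once these cited inputs are in hand, the deduction is a one-line application of the theorem from the previous section, together with the homogeneity observation from Example \ref{exm3.8}.
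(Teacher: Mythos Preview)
Your proposal is correct and follows essentially the same approach as the paper: both invoke Mukai's results to obtain Gieseker (hence slope) semistability and the total Chern class formula $c(E) = (1 + c_1(E)/r)^r$, deduce $c_2(\End(E)) = 0$, and then apply Theorem \ref{thm1.2} together with the homogeneity observation from Example \ref{exm3.8}. The only difference is that you make the appeal to Example \ref{exm3.8} and Theorem \ref{thm1.2} explicit, whereas the paper leaves this implicit in the phrase ``Hence the result follows.''
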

\begin{corl}\label{thm3.9}
 Let $X$ be a smooth complex projective variety $X$ with $\overline{\Eff}^1(X) = \Nef^1(X)$ and $E_1,E_2,\cdots,E_k$ be finitely many semistable vector bundles on $X$ of ranks $r_1,r_2,\cdots,r_k$ respectively with $c_2\bigl(\End(E_i)\bigr) = 0$ for all $i \in \{1,2,\cdots,k\}$. Then
  \begin{center}
 $\Nef^1\bigl(\mathbb{P}(E_1) \times_X \mathbb{P}(E_2)\times_X\cdots\times_X \mathbb{P}(E_k)\bigr) = \overline{\Eff}^1\bigl(\mathbb{P}(E_1) \times_X \mathbb{P}(E_2)\times_X \cdots\times_X \mathbb{P}(E_k)\bigr).$
 \end{center}
  Moreover, if $\Nef^1(X) = \overline{\Eff}^1(X)$ is a finite polyhedron and
\begin{align*}
\pi_r : X_r := \mathbb{P}(E_1) \times_X \mathbb{P}(E_2)\times_X \cdots\times_X \mathbb{P}(E_r) \longrightarrow X_{r-1} := \mathbb{P}(E_1) \times_X \mathbb{P}(E_2)\times_X \cdots\times_X \mathbb{P}(E_{r-1})
\end{align*}
is the projection map for $r\in\{1,2,\cdots,k\}$ with $X_0 = X$, then for each $r$ satisfying $1 \leq r \leq k$
\begin{align*}
\Nef^1\bigl(X_r\bigr) = \overline{\Eff}^1\bigl(X_r\bigr)
= \bigl\{ x_0\lambda_{\psi_{r}^*E_r} + x_1L_1 + x_2L_2 + \cdots+x_rL_r\bigr\}
\end{align*}
where $L_1,L_2,\cdots,L_r$ are the nef generators of the nef cone of $X_{r-1}$ and $\psi_r := \pi_1\circ\pi_2\circ\cdots\circ\pi_{r-1}$.

Conversely, if $X$ is a smooth curve, then the equality $\Nef^1\bigl(\mathbb{P}(E_1) \times_X \cdots\times_X \mathbb{P}(E_k)\bigr) = \overline{\Eff}^1\bigl(\mathbb{P}(E_1) \times_X \cdots\times_X \mathbb{P}(E_k)\bigr)$ implies that $E_i$ is semistable for each $i$.
 \begin{proof}
 We will proceed by induction on $k$. For $k=1$, this is precisely the statement of Theorem \ref{thm1.2}. Now suppose the theorem holds true for $k-1$ many vector bundles. Consider the following fibre product diagram.
 \begin{center}
 \begin{tikzcd}
 X_k:= \mathbb{P}(E_1) \times_X \mathbb{P}(E_2)\times_X \cdots\times_X \mathbb{P}(E_k) \arrow[r, " "] \arrow[d, " \pi_k"]
& \mathbb{P}(E_k)\arrow[d,""]\\
X_{k-1}:= \mathbb{P}(E_1) \times_X \mathbb{P}(E_2)\times_X \cdots\times_X \mathbb{P}(E_{k-1})  \arrow[r, "\psi_k" ]
& X
\end{tikzcd}
\end{center}
Note that 
\begin{center}
$X_k = \mathbb{P}(E_1) \times_X \mathbb{P}(E_2)\times_X \cdots\times_X \mathbb{P}(E_k) \simeq \mathbb{P}_{X_{k-1}}(\psi_k^*E_k)$. 
\end{center}
Since $E_k$ is semistable with $c_2\bigl(\End(E_k)\bigr) = 0$ on $X$, by Lemma \ref{lem3.2} it's pullback $\psi_k^*E_k$ under $\psi_k$ is also semistable with $c_2\bigl(\End(\psi_k^*E_k)\bigr) = 0$. By induction hypothesis we have 
\begin{align*}
 \Nef^1\bigl(\mathbb{P}(E_1) \times_X \mathbb{P}(E_2)\times_X\cdots\times_X \mathbb{P}(E_{k-1})\bigr) = \overline{\Eff}^1\bigl(\mathbb{P}(E_1) \times_X \mathbb{P}(E_2)\times_X \cdots\times_X \mathbb{P}(E_{k-1})\bigr).
\end{align*}
Therefore applying Theorem \ref{thm1.2} we get the result. 

Conversely, if $X$ is a curve and $\Nef^1\bigl(\mathbb{P}(E_1) \times_X  \cdots\times_X\mathbb{P}(E_k)\bigr) = \overline{\Eff}^1\bigl(\mathbb{P}(E_1) \times_X \cdots\times_X \mathbb{P}(E_k)\bigr),$  then inductively 
$\Nef^1\bigl(\mathbb{P}(E_i)\bigr) = \overline{\Eff}^1\bigl(\mathbb{P}(E_i)\bigr)$ for each $i\in\{1,2,\cdots,k\}$. This implies that each $E_i$ is semistable by [Theorem 3.1,\cite{Mi}]. This completes the proof.
 \end{proof}
 \end{corl}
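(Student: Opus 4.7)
The plan is to proceed by induction on the number $k$ of vector bundles, exploiting the isomorphism
\begin{equation*}
X_k := \mathbb{P}(E_1) \times_X \cdots \times_X \mathbb{P}(E_k) \;\simeq\; \mathbb{P}_{X_{k-1}}(\psi_k^* E_k),
\end{equation*}
where $\psi_k : X_{k-1} \to X$ denotes the natural projection of the fibre product to $X$. The base case $k=1$ is exactly Theorem \ref{thm1.2}. For the inductive step, Lemma \ref{lem3.2} guarantees that $\psi_k^* E_k$ is a semistable bundle on $X_{k-1}$ with $c_2(\End(\psi_k^* E_k)) = 0$, since semistability together with vanishing $c_2$ of the endomorphism bundle is preserved under pullback. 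The induction hypothesis gives $\Nef^1(X_{k-1}) = \overline{\Eff}^1(X_{k-1})$, so both hypotheses of Theorem \ref{thm1.2} are satisfied for the bundle $\psi_k^* E_k$ on $X_{k-1}$. A direct application of Theorem \ref{thm1.2} to this bundle then yields the desired equality $\Nef^1(X_k) = \overline{\Eff}^1(X_k)$.

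For the explicit description of the nef cone in the polyhedral case, I would invoke the \emph{in addition} clause of Theorem \ref{thm1.2} inductively: at stage $r$, if $\Nef^1(X_{r-1})$ is generated by finitely many nef classes $L_1, \ldots, L_{r-1}$, then the theorem immediately produces $\lambda_{\psi_r^* E_r}$ together with the pullbacks of the $L_j$ as generators of $\Nef^1(X_r)$, matching the claimed description.

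For the converse, assume $X$ is a smooth curve and $\Nef^1(X_k) = \overline{\Eff}^1(X_k)$. For each $i$, the fibre product structure supplies a natural surjective proper projection $p_i : X_k \to \mathbb{P}(E_i)$. Given any effective divisor $D$ on $\mathbb{P}(E_i)$, the pullback $p_i^* D$ is effective on $X_k$ and hence nef by hypothesis; since nefness descends along surjective proper morphisms (any curve in $\mathbb{P}(E_i)$ lifts to a curve in $X_k$), $D$ itself is nef. This gives $\overline{\Eff}^1(\mathbb{P}(E_i)) \subseteq \Nef^1(\mathbb{P}(E_i))$, forcing equality, whereupon Miyaoka's characterization recorded in Remark \ref{remark1} yields the semistability of each $E_i$.

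The main obstacle I anticipate is correctly setting up the projective bundle identification $X_k \simeq \mathbb{P}_{X_{k-1}}(\psi_k^* E_k)$ so that the induction cleanly feeds into Theorem \ref{thm1.2}; once this is in place, the forward direction and the generator description become routine iterations of the previously established machinery, and the converse is an immediate pullback/descent argument plus the Miyaoka characterization.
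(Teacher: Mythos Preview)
Your proposal is correct and follows essentially the same approach as the paper: induction on $k$ via the identification $X_k \simeq \mathbb{P}_{X_{k-1}}(\psi_k^* E_k)$, Lemma \ref{lem3.2} for preservation of semistability with vanishing $c_2(\End)$ under pullback, and Theorem \ref{thm1.2} for both the equality and the polyhedral description. Your converse argument via the direct projections $p_i : X_k \to \mathbb{P}(E_i)$ is a slightly more explicit version of what the paper compresses into the word ``inductively''; both rest on the same descent of the equality $\Nef^1 = \overline{\Eff}^1$ along surjective proper morphisms (the content of the implication $(3)\Rightarrow(1)$ in Theorem \ref{thm1.2}) followed by Miyaoka's characterization.
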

\begin{xrem}
\rm In general, the computations of nef cones $\Nef^1\bigl(\mathbb{P}_X(E)\bigr)$ of any projective bundle $\mathbb{P}_X(E)$ over an arbitrary $X$ can be very complicated, even on smooth surfaces $X$ (see \cite{MN20}). In \cite{K-M}, $\Nef^1\bigl(\mathbb{P}(E_1)\times_C\mathbb{P}(E_2)\bigr)$ of fibre product over a smooth curve $C$ is calculated for any two vector bundles $E_1$ and $E_2$ (not necessarily semistable bundles). We note that the proof in \cite{K-M} easily generalizes to fibre product of finitely many copies of projective bundles over $C$.
\end{xrem}
\begin{xrem}\label{remark2}
 \rm In particular, the above Corollary \ref{thm3.9} generalizes Theorem 4.1 in \cite{K-M-R}. Moreover, Corollary \ref{thm3.9} indicates that the hypothesis about semistability of $E$ in Theorem \ref{thm1.2} is necessary. This is because on a smooth complex projective curve $C$, take one semistable bundle $E_1$ and another unstable bundle $E_2$. Then  $\Nef^1\bigl(\mathbb{P}(E_1) \times_C \mathbb{P}(E_2)\bigl) \subsetneq \overline{\Eff}^1\bigl(\mathbb{P}(E_1) \times_C \mathbb{P}(E_2)\bigr)$. But $\Nef^1\bigl(\mathbb{P}(E_1)\bigr) = \overline{\Eff}^1\bigl(\mathbb{P}(E_1)\bigr)$ and $\mathbb{P}(E_1) \times_C \mathbb{P}(E_2) \simeq \mathbb{P}_{\mathbb{P}(E_1)}(\pi_1^*(E_2))$ where $\pi_1 : \mathbb{P}(E_1) \longrightarrow C$ is the projection map. Note that $\pi_1^*E_2$ is unstable with $c_2\bigl(\End(\pi_1^*E_2)\bigr) = 0$ (see \cite{M20}). In view of this remark, we finish this section with the following question :
 \vspace{2mm}
 
 \it Question : \rm Let $E$ be a vector bundle on a smooth projective variety $X$ and $\Nef^1\bigl(\mathbb{P}_X(E)\bigr) = \overline{\Eff}^1\bigl(\mathbb{P}_X(E)\bigr)$. Does this imply that $E$ is a semistable bundle with $c_2\bigl(\End(E)\bigr) = 0?$
\end{xrem}
\section{Weak Zariski decomposition on fibre product of projective bundles}

Let $E$ and $E'$ be two vector bundles of ranks $m$ and $n$ respectively on a smooth complex projective curve $C$ with $\deg(E) = d$ and $\deg(E') = d'$. Let 
 \begin{align*}
 0 = E_{k} \subset E_{k-1} \subset \cdots\subset E_{1} \subset E_0 = E
 \end{align*}
 and 
 \begin{align*}
 0 = E'_{l} \subset E'_{l-1} \subset \cdots\subset E'_{1} \subset E'_0 = E'
 \end{align*}
be the Harder-Narasimhan filtrations of $E$ and $E'$ respectively. Let us fix $Q_1 := E/E_1 , \rank(Q_1) = r_1 , \deg(Q_1) = d_1 , \mu(Q_1) = \mu_1 = \frac{d_1}{r_1}$ ; $Q'_1 := E'/E'_1 , \rank(Q'_1) = r'_1 , \deg(Q'_1) = d'_1 , \mu(Q'_1) = \mu'_1 = \frac{d'_1}{r'_1}$.
 
Consider the following fibre product diagram:
\begin{center}
 \begin{tikzcd} 
 X = \mathbb{P}(\pi_1^*E) = \mathbb{P}(E) \times_C \mathbb{P}(E') \arrow[r, "\pi'"] \arrow[d, "\pi"]
& \mathbb{P}(E)\arrow[d,"\pi_2"]\\
\mathbb{P}(E') \arrow[r, "\pi_1" ]
& C
\end{tikzcd}
\end{center}
Let $f_1$ and $f_2$ denote the numerical classes of fibres of $\pi_1$ and $\pi_2$ respectively, and $\xi = \pi'^*c_1\bigl(\mathcal{O}_{\mathbb{P}(E)}(1)\bigr)$, \hspace{1mm} $\zeta = \pi^*c_1\bigl(\mathcal{O}_{\mathbb{P}(E')}(1)\bigr)$, \hspace{1mm} $F = \pi^*f_1 = \pi'^*f_2$. Therefore, 
\begin{center}
$N^1(X)_{\mathbb{R}} = N^1\bigl(\mathbb{P}(\pi_1^*(E)\bigr)_{\mathbb{R}} =\bigl\{ x \xi + y \zeta + z F \mid x,y,z \in \mathbb{R} \bigr\}$.
\end{center}
Also, by [Theorem 3.1,\cite{K-M}]
\begin{align*}
 \Nef^1(X) = \Bigl\{ a(\xi - \mu_1F) + b(\zeta - \mu'_1F) + cF \mid a,b,c \in \mathbb{R}_{\geq 0}\Bigr\}.
\end{align*}
We use the above notations in the rest of this section. The intersection products in $X$ are as follows:
\begin{center}
$\xi^mF = \zeta^nF = 0$ , $\xi^{m+1} = \zeta^{n+1} = 0$ , $F^2 = 0$ , $\zeta^n\xi^{m-1} = \deg(E') = d'$ , $\zeta^{n-1}\xi^m = \deg(E) = d$ ,
$\zeta^n = \deg(E')\zeta^{n-1}F = d'\zeta^{n-1}F$ , $\xi^m = \deg(E)\zeta^{m-1}F = d\zeta^{m-1}F$ . 
\end{center}
Since $\pi_1$ is a smooth map, in particular it is flat and hence $\pi_1^*$ is an exact functor. Also we observe that for any semistable vector bundle $V$ on $C$ and for any ample line bundle $H$ on $\mathbb{P}(E')$, $\pi_1^*(V)$ is $H$-semistable with $\mu_H\bigl(\pi_1^*V) = \mu(V)\bigl(f_1\cdot H^{n-1}\bigr)$ and $f_1\cdot H^{n-1} > 0$. These observations immediately imply that 
\begin{align*}
 0 = \pi_1^*E_{k} \subset \pi_1^*E_{k-1} \subset \cdots\subset \pi_1^*E_{1} \subset \pi_1^*E_0 = \pi_1^*E
\end{align*}
is the unique Harder-Narasimhan filtration of $\pi_1^*E$ with respect to any ample line bundle $H$ on $\mathbb{P}(E')$ with $\mathcal{Q}_1 := \pi_1^*E / \pi_1^*E_1 = \pi_1^*Q_1$. Similar argument holds for $\pi_2^*E'$.
We fix $\mathcal{E}_i := \pi_1^*E_i$ for all $i \in \{0,1,2,\cdots,k\}$ and $\mathcal{E}'_i := \pi_2^*E'_i$ for all $i \in \{0,1,2,\cdots,l\}$. Define $\mathcal{E} := \mathcal{E}_0 = \pi_1^*(E)$.

The projection map $p : \mathbb{P}(\mathcal{E}) \backslash \mathbb{P}(\mathcal{Q}_1) \longrightarrow \mathbb{P}(\mathcal{E}_1)$ can be seen as a rational map. The indeterminacies of this rational map are resolved by blowing up $\mathbb{P}(\mathcal{Q}_1)$.
Now we consider the following commutative diagram :
 \begin{center}
 \begin{tikzcd} 
\tilde{X} = \Bl_{\mathbb{P}(\mathcal{Q}_1)}X \arrow[r, "\eta"] \arrow[d, "\beta"]
& \mathbb{P}(\mathcal{E}_1) = Y \arrow[d,"\rho"]\\
X = \mathbb{P}(\mathcal{E}) \arrow[r, "\pi" ]
& \mathbb{P}(E') = \mathcal{M}
\end{tikzcd}
\end{center}
By following the ideas of Proposition 2.4 in \cite{Fu} (See Remark 2.5 \cite{Fu}), one can find 
a locally free sheaf $\mathcal{F}$ on $Y$ such that $\bigl(\tilde{X},Y,\eta\bigr)$ is the projective bundle $\mathbb{P}_Y(\mathcal{F})$ over $Y$ with $\eta$ as the projection map. Moreover, if $\gamma := \mathcal{O}_{\mathbb{P}_Y(\mathcal{F})}(1)$, 
$\xi := \mathcal{O}_{\mathbb{P}(\mathcal{E})}(1)$ and $\xi_1 := \mathcal{O}_{\mathbb{P}(\mathcal{E}_1)}(1)$,
then $\gamma = \beta^*\xi$ and $\eta^*\xi_1 = \beta^*\xi - \tilde{E}$ where $\tilde{E}$ is numerical class of the exceptional divisor of the map $\beta.$
Also, $\tilde{E}\cdot\beta^*\bigl(\xi - \mu_1F\bigr)^{r_1} = 0$.
\vspace{1mm}

In the rest of this section, we give a proof of existence of weak Zariski decomposition for every pseudo-effective $\mathbb{R}$-divisor on
$X = \mathbb{P}(E)\times_C\mathbb{P}(E')$. We consider the following three cases :
\vspace{1mm}

\bf CASE 1 \rm: We assume that $E'$ is semistable and $E$ is unstable with $\rank(Q_1) = m-1$. We show that $\overline{\Eff}^1(X)$ 
is a finite polyhedron and weak Zariski decomposition exists for every extremal ray of  $\overline{\Eff}^1(X)$. In particular, this proves the existence 
of weak Zariski decomposition for every pseudo-effective $\mathbb{R}$-divisor on $X$ in this case (see  Proposition 1 in \cite{M-S-C}).
\vspace{1mm}

\bf CASE 2 \rm: We assume that both $E$ and $E'$ are unstable with  $\rank(Q_1) = m-1$ and  $\rank(Q'_1) = n-1$. The proof of existence of weak Zariski
decomposition in this case is identical to the proof in CASE 1.
\vspace{1mm}

\bf CASE 3 \rm: In this case, we assume that at least one of them, say $E$ is unstable with $\rank(Q_1) < m-1$. We define a cone map 
\begin{center}
$C^{(1)} :  \overline{\Eff}^1\bigl(\mathbb{P}(\mathcal{E}_1)\bigr) \longrightarrow \overline{\Eff}^1\bigl(\mathbb{P}(\mathcal{E})\bigr)$
\end{center}
which is an isomorphism. Inductively, the Harder-Narasimhan filtration of 
$E$ gives the isomorphisms 
\begin{center}
$\overline{\Eff}^1\bigl(\mathbb{P}(\mathcal{E}_{k-1})\bigr) \simeq \cdots \simeq \overline{\Eff}^1\bigl(\mathbb{P}(\mathcal{E}_{1})\bigr) \simeq \overline{\Eff}^1\bigl(\mathbb{P}(\mathcal{E})\bigr)$
\end{center}
of cone maps. We also prove that $C^{(1)}\bigl(D\bigr)$ admits a weak Zariski decomposition if $D$ does. Finally, we show the existence of weak Zariski
decomposition for any pseudo-effective $\mathbb{R}$-divisors on $X$ by using the results in CASE 1 and CASE 2 recursively.

\begin{lem}\label{lem5.3}
 Let $E$ and $E'$ be as above. Further assume that $E'$ is semistable and $E$ is unstable with $\rank(Q_1) = r_1 = m - 1$. Then,
 \begin{align*}
  \overline{\Eff}^1\bigl(\mathbb{P}(E)\times_C\mathbb{P}(E')\bigr) = \Bigl\{a\bigl(\xi + (d_1 - d)F\bigr) + b\bigl(\zeta - \mu'F\bigr) + cF \mid a,b,c \in \mathbb{R}_{\geq 0}\Bigr\},
 \end{align*}
where $\mu' = \mu(E')$. In this case, weak Zariski decomposition exists for pseudo-effective $\mathbb{R}$-divisors in $ X = \mathbb{P}(E)\times_C\mathbb{P}(E')$. 
\begin{proof}
 Note that $\xi + (d_1 - d)F = \bigl[ \mathbb{P}(\mathcal{Q}_1) \bigr] \in \Eff^1(X)$, $\zeta - \mu'F$ , $F \in \Nef^1(X)  \subset \overline{\Eff}^1(X)$. Hence
 \begin{align*}
  \Bigl\{a\bigl(\xi + (d_1 - d)F\bigr) + b\bigl(\zeta - \mu'F\bigr) + cF \mid a,b,c \in \mathbb{R}_{\geq 0}\Bigr\} \subseteq  \overline{\Eff}^1(X).
 \end{align*}
To prove the converse, consider an element $\alpha = a\xi + b\zeta + cF \in  \overline{\Eff}^1(X)$. Then using the intersection products in $X$, we get

$\alpha\cdot F(\zeta - \mu'F)^{n-1}(\xi - \mu_1F)^{m-2} = a \geq 0$ , \hspace{2mm}
$\alpha\cdot F(\zeta - \mu'F)^{n-2}(\xi - \mu_1F)^{m-1} = b \geq 0$,

$\alpha\cdot (\zeta - \mu'F)^{n-1}(\xi - \mu_1F)^{m-1} = \bigl\{ b\mu' - a(d_1-d) + c \bigr\} \geq 0$.

Note that
$\alpha = a\bigl(\xi + (d_1-d)F\bigr) + b\bigl(\zeta - \mu'F\bigr) + \bigl( b\mu' - a(d_1-d) + c ) F $.

This proves the equality. Note that, in this case weak Zariski decompostion exists for every extremal ray of $\overline{\Eff}^1(X)$. 
Hence the result follows from Proposition 1 in \cite{M-S-C}.
\end{proof}
\end{lem}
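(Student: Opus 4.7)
The plan is to give a two-way inclusion description of $\overline{\Eff}^1(X)$ and then invoke Proposition 1 of \cite{M-S-C} to deduce the existence of weak Zariski decompositions from the resulting finite polyhedrality.

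For the inclusion $\supseteq$, I would check that each of the three proposed generators is pseudo-effective. The class $F$ is the pullback of a point on $C$ and hence nef; the class $\zeta - \mu'F$ is nef by the formula for $\Nef^1(X)$ cited from \cite{K-M}, since semistability of $E'$ forces $\mu'_1 = \mu'$ in that formula. For $\xi + (d_1 - d)F$, the assumption $r_1 = m - 1$ forces $E_1 = \ker(E \twoheadrightarrow Q_1)$ to be a line bundle of degree $d - d_1$, so the quotient $E \twoheadrightarrow Q_1$ embeds $\mathbb{P}(Q_1) \hookrightarrow \mathbb{P}(E)$ as a codimension-one subvariety of class $c_1(\mathcal{O}_{\mathbb{P}(E)}(1)) - \pi_2^* c_1(E_1)$. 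Pulling back through $\pi'$ gives the effective divisor $\mathbb{P}(\mathcal{Q}_1)$ of class $\xi + (d_1 - d)F$.

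For the reverse inclusion $\subseteq$, I would pair an arbitrary $\alpha = a\xi + b\zeta + cF \in \overline{\Eff}^1(X)$ with three complete-intersection curve classes built from the nef divisors $F$, $\zeta - \mu'F$, $\xi - \mu_1 F$. By \cite{BDPP13}, each such intersection lies in $\overline{\ME}(X)$ and so pairs non-negatively with $\alpha$. Exploiting $F^2 = 0$ and the intersection table recalled above, the resulting computation yields
\begin{align*}
\alpha \cdot F(\zeta - \mu'F)^{n-1}(\xi - \mu_1 F)^{m-2} &= a, \\
\alpha \cdot F(\zeta - \mu'F)^{n-2}(\xi - \mu_1 F)^{m-1} &= b, \\
\alpha \cdot (\zeta - \mu'F)^{n-1}(\xi - \mu_1 F)^{m-1} &= a(d - d_1) + b\mu' + c,
\end{align*}
after substituting $\mu_1 = d_1/(m-1)$. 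Hence $a, b \geq 0$ and $c + a(d-d_1) + b\mu' \geq 0$, and rearranging gives $\alpha \equiv a\bigl(\xi + (d_1 - d)F\bigr) + b(\zeta - \mu' F) + \bigl(c + a(d-d_1) + b\mu'\bigr) F$, which manifestly lies in the proposed cone.

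With the polyhedral description in hand, the weak Zariski decomposition claim reduces via Proposition 1 of \cite{M-S-C} to checking the three extremal rays: the rays generated by $F$ and $\zeta - \mu'F$ are nef and decompose trivially as $P + 0$, while the ray generated by $\xi + (d_1-d)F$ is represented by the effective divisor $\mathbb{P}(\mathcal{Q}_1)$ and decomposes trivially as $0 + N$. The subtlest step is the appeal to BDPP duality, which applies here because complete intersections of nef divisors arise as limits of complete intersections of ample divisors and therefore lie in $\overline{\ME}(X)$; the other potential hazard is confirming that $\xi - \mu_1 F$ really is nef on $X$, but this follows directly from the cited description of $\Nef^1(X)$ with $\mu_1 = \mu(Q_1) = d_1/(m-1)$.
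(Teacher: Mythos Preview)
Your proposal is correct and follows essentially the same route as the paper: both prove the $\supseteq$ inclusion by identifying $\xi + (d_1 - d)F$ with the effective divisor $[\mathbb{P}(\mathcal{Q}_1)]$ and noting that the other two generators are nef, then prove $\subseteq$ by pairing an arbitrary pseudo-effective class with the same three complete-intersection curve classes built from $F$, $\zeta - \mu'F$, and $\xi - \mu_1 F$, and finally deduce weak Zariski decomposition from Proposition~1 of \cite{M-S-C}. Your version is slightly more explicit in justifying the non-negativity of the pairings via \cite{BDPP13} and in computing the class of $\mathbb{P}(\mathcal{Q}_1)$, but the argument is the same.
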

\begin{lem}\label{lem5.4}
 Let $E$ and $E'$ be two unstable bundles as above with $\rank(Q_1) = r_1 = m - 1$ and $\rank(Q'_1) = r'_1 = n - 1$. Then
 \begin{align*}
  \overline{\Eff}^1\bigl(\mathbb{P}(E)\times_C\mathbb{P}(E')\bigr) = \Bigl\{a\bigl(\xi + (d_1 - d)F\bigr) + b\bigl(\zeta + (d'_1 - d')F\bigr) + cF \mid a,b,c \in \mathbb{R}_{\geq 0}\Bigr\}.
 \end{align*}
 In this case also, weak Zariski decomposition exists for pseudo-effective $\mathbb{R}$-divisors in $ X = \mathbb{P}(E)\times_C\mathbb{P}(E')$. 
\begin{proof}
 The proof is similar to the proof of Lemma \ref{lem5.3}.
\end{proof}
\end{lem}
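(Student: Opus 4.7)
\textbf{Proof proposal for Lemma \ref{lem5.4}.} My plan is to imitate the argument of Lemma \ref{lem5.3}, with the role of the fibre class $\zeta - \mu' F$ (which was nef because $E'$ was assumed semistable) now replaced by the effective divisor class $\zeta + (d'_1 - d')F$ coming from the quotient $\mathcal{Q}'_1$ of $\pi_2^*E'$. Concretely, since $\rank(Q_1) = m-1$ and $\rank(Q'_1) = n-1$, the quotient $Q_1$ of $E$ gives a divisor $\mathbb{P}(\mathcal{Q}_1) \subset X$ whose class is $\xi - \pi^*c_1(E_1) = \xi + (d_1 - d)F$, and symmetrically $\mathbb{P}(\mathcal{Q}'_1) \subset X$ has class $\zeta + (d'_1 - d')F$; together with $F$ (which is nef, hence pseudoeffective), this gives the inclusion of the right-hand cone into $\overline{\Eff}^1(X)$.

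For the reverse inclusion, I take an arbitrary class $\alpha = a\xi + b\zeta + cF \in \overline{\Eff}^1(X)$ and test it against three curve classes obtained as products of the nef generators $\xi - \mu_1 F$, $\zeta - \mu'_1 F$, $F$ of $\Nef^1(X)$, exactly as in the proof of Lemma \ref{lem5.3}. Using the intersection relations $\xi^{m+1} = \zeta^{n+1} = F^2 = 0$, $\xi^m = dF\xi^{m-1}$, $\zeta^n = d'F\zeta^{n-1}$, and $F\xi^{m-1}\zeta^{n-1} = 1$, the pairings
\begin{align*}
\alpha \cdot F(\zeta-\mu'_1 F)^{n-1}(\xi-\mu_1 F)^{m-2}, \quad \alpha \cdot F(\zeta-\mu'_1 F)^{n-2}(\xi-\mu_1 F)^{m-1}
\end{align*}
should extract $a$ and $b$ respectively, while
\begin{align*}
\alpha \cdot (\zeta-\mu'_1 F)^{n-1}(\xi-\mu_1 F)^{m-1} = c - a(d_1-d) - b(d'_1-d')
\end{align*}
after using $r_1 \mu_1 = d_1$ and $r'_1 \mu'_1 = d'_1$. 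Since these three curve classes are movable (they lie in the dual cone to the nef cone described in \cite{K-M}), each of the three quantities above is non-negative. The identity
\begin{align*}
\alpha = a\bigl(\xi + (d_1-d)F\bigr) + b\bigl(\zeta + (d'_1-d')F\bigr) + \bigl(c - a(d_1-d) - b(d'_1-d')\bigr)F
\end{align*}
then exhibits $\alpha$ as a non-negative combination of the three claimed generators.

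Finally, for weak Zariski decomposition, the cone $\overline{\Eff}^1(X)$ is now a finite polyhedron whose three extremal rays are generated by two effective divisor classes (for which one takes $P=0$ and $N$ the divisor itself) and by the nef class $F$ (for which one takes $N = 0$ and $P = F$). So a weak Zariski decomposition exists trivially on each extremal ray, and Proposition 1 of \cite{M-S-C} propagates this to every class in $\overline{\Eff}^1(X)$. The main technical point I would need to be careful about is verifying that the three test curve classes really lie in $\overline{\ME}(X)$ (not merely that they are nef as curves) so that the pairing argument yields the claimed pseudoeffectivity inequalities; this is where I would appeal directly to the duality statement used in the proof of Lemma \ref{lem5.3}, which treats exactly this fibre-product setting.
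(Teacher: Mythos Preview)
Your proposal is correct and follows exactly the approach the paper intends: the paper's own proof is the single line ``The proof is similar to the proof of Lemma \ref{lem5.3},'' and you have simply carried out that adaptation, replacing the nef class $\zeta-\mu'F$ by the effective class $\zeta+(d'_1-d')F=[\mathbb{P}(\mathcal{Q}'_1)]$ and using $\mu'_1$ in place of $\mu'$ in the test curves. Your closing concern is a non-issue: the three test classes are products of $\dim X-1$ nef divisors, and such products always pair non-negatively with pseudoeffective divisors (this is exactly what the paper uses implicitly in Lemma \ref{lem5.3}), so no separate appeal to $\overline{\ME}(X)$ is needed.
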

\begin{lem}\label{lem5.2}
Let $E$ and $E'$ be two bundles as above and $E$ is unstable with $\rank(Q_1) = r_1 < ( m - 1 ).$ Recall the following commutative diagram :
\begin{center}
 \begin{tikzcd} 
\tilde{X} = \mathbb{P}_Y(\mathcal{F}) = \Bl_{\mathbb{P}(\mathcal{Q}_1)}X \arrow[r, "\eta"] \arrow[d, "\beta"]
& \mathbb{P}(\mathcal{E}_1) = Y \arrow[d,"\rho"]\\
X = \mathbb{P}(\mathcal{E}) \arrow[r, "\pi" ]
& \mathbb{P}(E') = \mathcal{M}
\end{tikzcd}
\end{center}
Then

$(i)$ The cone map 
\begin{align*}
 C^{(1)} : \overline{\Eff}^1\bigl(\mathbb{P}(\mathcal{E}_1)\bigr) \longrightarrow \overline{\Eff}^1\bigl(\mathbb{P}(\mathcal{E})\bigr)
\end{align*}
defined as follows :
\begin{align*}
 C^{(1)}(\alpha) := \beta_*\eta^*(\alpha),
\end{align*}
induces an isomorphism of $\overline{\Eff}^1\bigl(\mathbb{P}(\mathcal{E}_1)\bigr)$ onto $\overline{\Eff}^1\bigl(\mathbb{P}(\mathcal{E})\bigr)$.
\vspace{2mm}

$(ii)$ If $D \in \overline{\Eff}^1(Y)$ admits a weak Zariski decomposition, then its image $C^{(1)}(D) \in \overline{\Eff}^1(X)$ under the cone map also 
admits weak Zariski decomposition.
\end{lem}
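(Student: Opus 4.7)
My plan is to settle Part (i) by explicitly identifying $C^{(1)}$ as the canonical linear isomorphism $N^1(Y)_{\mathbb R}\cong N^1(X)_{\mathbb R}$ coming from the two projective-bundle structures over $\mathcal M$, and then verifying that this identification bijects the effective cones. Part (ii) will then follow from a brief diagram chase through the fibre product $\tilde X\times_Y Y'$, using the blow-up relation $\eta^*\xi_1=\beta^*\xi-\tilde E$.

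For Part (i), the projection formula for the birational morphism $\beta$, the commutativity $\pi\circ\beta=\rho\circ\eta$, and the vanishing $\beta_*\tilde E=0$---valid because the blow-up centre $\mathbb{P}(\mathcal Q_1)$ has codimension $m-r_1\geq 2$ in $X$---combine to give
\begin{align*}
 C^{(1)}(\xi_1) \;=\; \beta_*\bigl(\beta^*\xi-\tilde E\bigr) \;=\; \xi,\qquad C^{(1)}(\rho^*\sigma) \;=\; \beta_*\beta^*\pi^*\sigma \;=\; \pi^*\sigma
\end{align*}
for every $\sigma\in N^1(\mathcal M)_{\mathbb R}$. Under the natural decompositions $N^1(Y)_{\mathbb R}=\mathbb R\,\xi_1\oplus\rho^*N^1(\mathcal M)_{\mathbb R}$ and $N^1(X)_{\mathbb R}=\mathbb R\,\xi\oplus\pi^*N^1(\mathcal M)_{\mathbb R}$, the map $C^{(1)}$ is therefore the obvious identification, hence a linear isomorphism. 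The inclusion $C^{(1)}\bigl(\overline{\Eff}^1(Y)\bigr)\subseteq\overline{\Eff}^1(X)$ is formal: $\eta$ is flat (a projective bundle), so $\eta^*$ preserves pseudo-effectivity, and $\beta_*$ preserves effectivity of divisor classes since $\beta$ is proper. For the reverse inclusion---the main technical point---I would show that any effective $D\equiv a\xi+\pi^*\delta$ on $X$ (necessarily with $a\geq 0$) is the image under $C^{(1)}$ of a pseudo-effective class $a\xi_1+\rho^*\delta$ on $Y$. The input is that $\mathcal E_1$ is the maximal destabilizing subsheaf of $\mathcal E$: the natural filtration of $\Sym^a\mathcal E\otimes L_\delta$ has $\Sym^a\mathcal E_1\otimes L_\delta$ as its maximal-slope piece, forcing $D$ to have multiplicity at least $a$ along $\mathbb{P}(\mathcal Q_1)$ up to classes that are absorbed in the pseudo-effective closure; combined with the identity $\beta^*D=\eta^*(a\xi_1+\rho^*\delta)+a\tilde E$, this produces the desired pseudo-effective lift on $Y$.

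For Part (ii), suppose $f:Y'\to Y$ is a birational morphism realizing $f^*D\equiv P+N$ with $P$ nef and $N$ effective. Form the fibre product $\tilde X':=\tilde X\times_Y Y'$ with projections $\tilde\beta:\tilde X'\to\tilde X$ (the base change of $f$ along the projective bundle $\eta$, hence birational) and $\tilde\eta:\tilde X'\to Y'$. Then $h:=\beta\circ\tilde\beta:\tilde X'\to X$ is birational. Writing $D\equiv a\xi_1+\rho^*\delta$ with $a\geq 0$ (which follows by intersecting $D$ with a line in a fibre of $\rho$, a movable curve), the identity $\eta^*\xi_1=\beta^*\xi-\tilde E$ gives
\begin{align*}
 \beta^*C^{(1)}(D) \;=\; \eta^*D+a\tilde E.
\end{align*}
Pulling back along $\tilde\beta$ and substituting $f^*D\equiv P+N$:
\begin{align*}
 h^*C^{(1)}(D) \;\equiv\; \tilde\eta^*P \;+\; \bigl(\tilde\eta^*N+a\,\tilde\beta^*\tilde E\bigr).
\end{align*}
Here $\tilde\eta^*P$ is nef (nefness is preserved under pullback) and $\tilde\eta^*N+a\,\tilde\beta^*\tilde E$ is an effective $\mathbb R$-divisor (the flat pullback $\tilde\eta^*N$ of an effective divisor plus a non-negative multiple of the effective divisor $\tilde\beta^*\tilde E$). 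This is the required weak Zariski decomposition of $C^{(1)}(D)$ through $h$.

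The main obstacle is the reverse inclusion in Part (i)---extracting a pseudo-effective class on $Y$ from an effective divisor on $X$. Everything else, including the linear-algebraic identification of $C^{(1)}$, the forward inclusion of effective cones, and the whole of Part (ii), is essentially formal once the blow-up identities $\eta^*\xi_1=\beta^*\xi-\tilde E$ and $\beta_*\tilde E=0$ are in place.
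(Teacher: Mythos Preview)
Your treatment of Part~(ii) and of the linear identification in Part~(i), including the forward inclusion of effective cones, matches the paper almost verbatim: the fibre-product diagram, the identity $\beta^*C^{(1)}(D)=\eta^*D+a\tilde E$, and the resulting decomposition on $\tilde X\times_Y Y'$ are exactly what the paper does (your remark that $a\ge 0$ follows from intersecting with a fibre line is a point the paper leaves implicit).

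The gap is precisely where you flag it: the reverse inclusion in Part~(i). Your sketch---that the Harder--Narasimhan position of $\mathcal E_1$ inside $\mathcal E$ forces an effective $D\equiv a\xi+\pi^*\delta$ to vanish to order at least $a$ along $\mathbb P(\mathcal Q_1)$, ``up to classes absorbed in the pseudo-effective closure''---is not a proof. The multiplicity of a specific effective divisor along a subvariety is not controlled by numerical data or by the filtration on $\Sym^a\mathcal E$ in the way you suggest, and the hedge about the pseudo-effective closure has no precise content as written. Even granting that $\Sym^a\mathcal E_1$ sits at the top of a natural filtration of $\Sym^a\mathcal E$, this does not force a given section to factor through it, which is what a multiplicity bound would require.

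The paper takes a different and cleaner route that sidesteps multiplicities entirely. It writes down an explicit linear inverse $D_1:N^1(X)_{\mathbb R}\to N^1(Y)_{\mathbb R}$ to $C^{(1)}$, namely
\[
D_1(\alpha)\;=\;\eta_*\bigl(\delta\cdot\beta^*\alpha\bigr),\qquad \delta:=\beta^*(\xi-\mu_1 F)^{r_1},
\]
and checks on the basis $\xi,\zeta,F$ that $D_1$ realises the identification $\xi\mapsto\xi_1$, $\zeta\mapsto\nu$, $F\mapsto\rho^*f_1$. The key observation is that $\xi-\mu_1 F=\pi'^*\bigl(c_1(\mathcal O_{\mathbb P(E)}(1))-\mu_1 f_2\bigr)$ is nef, being the pullback of the nef boundary class on $\mathbb P_C(E)$ (since $\mu_1=\mu_{\min}(E)$), so $\delta$ is a product of nef divisors; and moreover $\tilde E\cdot\delta=0$. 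Hence for an effective $\alpha$ on $X$, writing $\beta^*\alpha=\alpha'+j_*\tilde\alpha$ with $\alpha'$ the strict transform and $j_*\tilde\alpha$ supported on $\tilde E$, the exceptional piece is annihilated by $\delta$ and one is left with $D_1(\alpha)=\eta_*(\delta\cdot\alpha')$, a pushforward of (nef)$^{r_1}\cdot$(effective), hence pseudo-effective. That is the missing idea: rather than estimating vanishing along $\mathbb P(\mathcal Q_1)$, one slices by a nef class that kills the exceptional divisor automatically.
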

\begin{proof}
$(i)$ We have $N^1(Y)_{\mathbb{R}} = N^1\bigl(\mathbb{P}(\pi_1^*E_1)\bigr)_{\mathbb{R}} = \{ a\xi_1 + b\nu + c\rho^*f_1 \mid a,b,c \in \mathbb{R}\}$ 
where $\nu = \rho^*c_1(\mathcal{O}_{\mathbb{P}(E')}(1))$. We define $\phi_1 : N^1(X)_{\mathbb{R}} \longrightarrow N^1(Y)_{\mathbb{R}}$ by 
\begin{center}
 $ \phi_1( a\xi + b\zeta + c F) = a\xi_1 + b\nu + c\rho^*f_1$.
\end{center}
This gives an isomorphism between real vector spaces $N^1(X)_{\mathbb{R}}$ and  $N^1(Y)_{\mathbb{R}}$.

Also we define $U_1 : N^1(Y)_{\mathbb{R}} \longrightarrow N^1(X)_{\mathbb{R}}$ as follows :
\begin{center}
$U_1(\alpha) = \beta_*\eta^*(\alpha).$
\end{center}
In particular, $U_1(a\xi_1 + b\nu + c\rho^*f_1) = a\xi + b\zeta + cF$.

As $\eta$ is flat and $\beta$ is bi-rational, the above map $U_1$ is well-defined.

We construct an inverse for $U_1$. Define $D_1 : N^1(X)_{\mathbb{R}} \longrightarrow N^1(Y)_{\mathbb{R}}$ as follows :
\begin{center}
$ D_1(\alpha) = \eta_*(\delta\cdot\beta^*\alpha)$, \hspace{2mm} where $\delta := \beta^*(\xi - \mu_1 F)^{r_1}$. 
\end{center}
Note that 
$D_1(\xi) = \eta_*(\delta\cdot\beta^*\xi) = \eta_*\bigl(\delta\cdot(\eta^*\xi_1 + \tilde{E})\bigr) = \eta_*\bigl(\delta\cdot\eta^*\xi_1 + \delta\cdot \tilde{E}\bigr) = \eta_*\delta\cdot \xi_1 = \bigl[Y\bigr]\cdot \xi_1.$
Similarly, $D_1(\zeta) = \nu$ and $D_1(\pi^*f_1) = \rho^*(f_1)$. This shows that the maps $D_1$ and $\phi_1$ are the same maps.

Next we will show that the map $D_1$ sends effective divisor in $X$ to $\overline{\Eff}^1(Y)$. For any effective divisor $\alpha$, we can write $\beta^*(\alpha) = {\alpha}' + j_*\tilde{\alpha}$ for some $\tilde{\alpha}$, where $\alpha'$ 
is the strict transform under the map $\beta$ and $j : \tilde{E} \longrightarrow \tilde{X}$ is the canonical inclusion. (Here by abuse of notation
$\tilde{E}$ is also the support of $\tilde{E}$). Since $\delta$ is intersection of nef divisors and $\eta_*$ maps pseudo-effective divisors to pseudo-effective 
divisors, it is enough to prove that $\delta \cdot j_*\tilde{\alpha} = 0$. This is true because $\tilde{E}\cdot \delta^{r_1} = 0$ and 
$\tilde{E}\cdot N(\tilde{X}) = j_*N(\tilde{E})$.
\vspace{2mm}

$(ii)$ As $D$ admits weak Zariski decomposition, there is a bi-rational transformation 
$\psi : Y' \rightarrow Y$ such that $\psi^*(D) = P + N$, with $ P \in \Nef^1(Y')$ and $N \in \Eff^1(Y')$. 
Then we have the following commutative diagram :
 \begin{center}
  \begin{tikzcd}
Y' \times_Y \tilde{X} \arrow[r,"\tilde{\psi}"] \arrow[d,"\tilde{\eta}"] & \tilde{X} \arrow[r,"\beta"] \arrow[d,"\eta"] & X \arrow[dl,dashrightarrow,"p"]\\
Y' \arrow[r,"\psi"]
& Y
\end{tikzcd}
\end{center}
We denote $D' := C^{(1)}(D) \in \overline{\Eff}^1(X)$. Let $D' = a\xi + b \zeta + c \pi^*f_1$ for some $a,b,c \in \mathbb{R}$. 
Then by the above construction of the map $C^{(1)}$, we have $D = a\xi_1 + b \nu + c\rho^*f_1$. 

We will show that $\beta^*(D') = \eta^*(D) + a\tilde{E}$. 

Recall that $\nu = \rho^*c_1\bigl(\mathcal{O}_{\mathbb{P}(E')}(1)\bigr)$, $\zeta = \pi^*c_1\bigl(\mathcal{O}_{\mathbb{P}(E')}(1)\bigr)$ and 
$\beta^*\zeta = \eta^*\nu$. Also, $\beta^*\xi = \eta^*\xi_1 + \tilde{E}$. 

Using these relations we conclude

$\beta^*(D')$

$=\beta^*(a\xi+b\zeta+c\pi^*f_1)$

$= a\beta^*\xi + b\beta^*\zeta + c\beta^*(\pi^*f_1)$

$= a\eta^*\xi_1 + a\tilde{E} + b\eta^*\nu + c\eta^*(\rho^*f_1)$

$=\eta^*(a\xi_1+b\nu+c\rho^*f_1) + a\tilde{E}$

$= \eta^*(D) + a\tilde{E}$.
\vspace{2mm}

Therefore, $\bigl(\beta\circ\tilde{\psi}\bigr)^*(D') = \tilde{\psi}^*\beta^*(D')$

$= \psi^*(\eta^*D + a\tilde{E})$

$= \psi^*\eta^*(D) + \psi^*(a\tilde{E})$

$=\tilde{\eta}^*\psi^*(D) + \psi^*(a\tilde{E})$

$=\tilde{\eta}^*(P + N) + \psi^*(a\tilde{E})$

$= \tilde{\eta}^*(P) + \tilde{\eta}^*(N) + \psi^*(a\tilde{E})$.
\vspace{2mm}

The map $\tilde{\psi}$ is bi-rational. This implies $\beta\circ\tilde{\psi}$ is also bi-rational,
$\tilde{\eta}^*(P) \in \Nef^1\bigl(Y' \times_Y \tilde{X} \bigr)$ and $\tilde{\eta}^*(N) + \psi^*(a\tilde{E}) \in \Eff^1\bigl(Y' \times_Y \tilde{X} \bigr)$. Hence we obtain a weak Zariski decomposition of $D' = C^{(1)}(D)$.
\end{proof}

\begin{thm}\label{thm5.4}
 Let $E$ and $E'$ be two vector bundles of rank $m$ and $n$ respectively on a smooth complex projective curve $C$. Then weak Zariski decomposition exists 
 for the fibre product $\mathbb{P}(E)\times_C\mathbb{P}(E')$.
 \begin{proof}
 The following three cases can occur :
 
 \underline{Case I :}
If both $E$ and $E'$ are semistable bundles on $C$, then 
\begin{align*}
\Nef^1\bigl(\mathbb{P}(E)\times_C\mathbb{P}(E')\bigr) = \overline{\Eff}^1\bigl(\mathbb{P}(E)\times_C\mathbb{P}(E')\bigr).
\end{align*}
and hence weak Zariski decomposition exists trivially.

\underline{Case II :}
Let exactly one of $E$ and $E'$ be unstable, and without loss of generality, we assume that $E'$ is semistable. Let
\begin{align*}
 0 = E_{k} \subset E_{k-1} \subset \cdots\subset E_{1} \subset E_0 = E
 \end{align*}
be the Harder-Narasimhan filtration of $E$. If $\rank(E/E_1) = m - 1$, then by Lemma \ref{lem5.3} weak Zariski decomposition exists for any pseudo-effective $\mathbb{R}$-divisors in $\mathbb{P}(E)\times_C\mathbb{P}(E')$. If $\rank(E/E_1) < m - 1$, 
then the cone map as in Lemma \ref{lem5.2}
 \begin{align*}
 C^{(1)} : N^1(Y) \longrightarrow N^1(X)
\end{align*}
induces an isomorphism $C^{(1)} := \beta_*\eta^*\vert_{\overline{\Eff}^1(\mathbb{P}(\pi_1^*E_1))}$ onto $\overline{\Eff}^1(\mathbb{P}(\pi_1^*E))$. 

Recursively, we have isomorphisms of cone maps such that
\begin{align*}
 \overline{\Eff}^1\bigl(\mathbb{P}(\pi_1^*E_{k-1})\bigr) \simeq \cdots \simeq \overline{\Eff}^1\bigl(\mathbb{P}(\pi_1^*E_{1})\bigr) \simeq \overline{\Eff}^1\bigl(\mathbb{P}(\pi_1^*E)\bigr).
\end{align*}
Now the subbundle $E_{k-1}$ is a semistable vector bundle on $C$. Therefore the weak Zariski decomposition exists on $\mathbb{P}(\pi_1^*E_{k-1}) \simeq \mathbb{P}(E_{k-1}) \times_C \mathbb{P}(E')$ by Case I. Since cone maps preserve pseudo-effectivity, by applying Lemma \ref{lem5.2} repeatedly we get that weak Zariski decomposition exists for $\mathbb{P}(\pi_1^*E) \simeq \mathbb{P}(E) \times_C \mathbb{P}(E')$.

\underline{Case III :} 
In this case, both $E$ and $E'$ are unstable. By arguing similarly as in Case II, and by using Lemma \ref{lem5.4} and Case II, we have the existence of weak Zariski decomposition for pseudo-effective $\mathbb{R}$-divisors in this case also.
\end{proof}
\end{thm}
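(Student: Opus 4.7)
The plan is to split into three cases according to the stability of $E$ and $E'$, reducing each to a situation already settled by Corollary \ref{thm3.9} or to the base cases supplied by Lemmas \ref{lem5.3} and \ref{lem5.4}.

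If both $E$ and $E'$ are semistable, then (since $\dim C = 1$) the classes $c_2(\End(E))$ and $c_2(\End(E'))$ vanish automatically, and Corollary \ref{thm3.9} yields $\Nef^1(\mathbb{P}(E)\times_C\mathbb{P}(E')) = \overline{\Eff}^1(\mathbb{P}(E)\times_C\mathbb{P}(E'))$, so every pseudo-effective class is nef and weak Zariski decomposition is immediate. Next assume exactly one is unstable, say $E$, with Harder-Narasimhan filtration $0 = E_k \subsetneq \cdots \subsetneq E_1 \subsetneq E_0 = E$. I would argue by induction on $m = \rank(E)$: if $\rank(Q_1) = m - 1$ then Lemma \ref{lem5.3} applies directly, while if $\rank(Q_1) < m - 1$, Lemma \ref{lem5.2}(i) supplies a cone isomorphism $C^{(1)} : \overline{\Eff}^1(\mathbb{P}(\pi_1^*E_1)) \longrightarrow \overline{\Eff}^1(\mathbb{P}(\pi_1^*E))$. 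Since $\rank(E_1) < m$, the inductive hypothesis gives WZD on $\mathbb{P}(E_1)\times_C\mathbb{P}(E') \simeq \mathbb{P}(\pi_1^*E_1)$, and Lemma \ref{lem5.2}(ii) transports each such decomposition forward to $\mathbb{P}(E)\times_C\mathbb{P}(E')$.

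When both $E$ and $E'$ are unstable, the same scheme is executed on both factors: iterate Lemma \ref{lem5.2} on the $E$-side to peel off top HN quotients until either one reaches a semistable subbundle (reducing to the previous case, with the symmetric cone-map reduction now applied in the $E'$-direction) or one arrives at a bundle whose top HN quotient has corank one. The latter case, together with the analogous condition on $E'$, is the content of Lemma \ref{lem5.4}. The main technical hurdle will be the bookkeeping: one must verify that at every stage where Lemma \ref{lem5.2} ceases to apply (because $\rank(Q_i)$ equals $\rank(E_{i-1}) - 1$), one of Lemmas \ref{lem5.3} or \ref{lem5.4} is directly applicable, and that the symmetric cone-map construction on the $E'$-side disposes of the remaining reductions. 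Granting this, property (ii) of Lemma \ref{lem5.2} propagates weak Zariski decomposition through the entire tower of cone maps back to $\mathbb{P}(E)\times_C\mathbb{P}(E')$, completing the proof.
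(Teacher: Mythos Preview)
Your proposal is correct and follows essentially the same three-case strategy as the paper: both semistable handled by Corollary \ref{thm3.9}, the mixed case reduced via the cone maps of Lemma \ref{lem5.2} with Lemma \ref{lem5.3} as a base, and the doubly unstable case reduced symmetrically using Lemma \ref{lem5.4}. Your formulation as an induction on $\rank(E)$ is a slightly cleaner packaging of the paper's recursive descent along the Harder--Narasimhan filtration, but the underlying argument is the same.
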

\section{Results about homogeneous bundles}
A vector bundle $E$ on a projective variety $X$ is called $k$-homogeneous if $\overline{\Eff}^k\bigl(\mathbb{P}(E)\bigr) = \Nef^k\bigl(\mathbb{P}(E)\bigr)$. Let $E$ be a vector bundle of rank $r \geq 2$ on a smooth complex projective surface $X$ and $\pi : \mathbb{P}(E) \longrightarrow X$ be the projection map. Then for each 
$k \in \{2,\cdots,r-1\}$,
\begin{align*}
 A^k\bigl(\mathbb{P}(E)\bigr) = \xi^{k} \oplus \xi^{k-1}\pi^*A^1(X) \oplus \xi^{k-2}\pi^*A^2(X),
\end{align*}
where $\xi := c_1\bigl(\mathcal{O}_{\mathbb{P}(E)}(1)\bigr) \in A^1(X)$. We will denote the numerical class of a fibre of $\pi$ by
$F$. We will continue to use the above notations in what follows. Let $L$ be a line bundle on $X$. Then the intersection products on $\mathbb{P}(E)$ are as follows:
\begin{center}
 $\xi^{r-1}F = 1 \hspace{2mm},\hspace{2mm} \xi^{r}\pi^*L= c_1(E)\cdot L \hspace{2mm},\hspace{2mm} \xi^{r-2}(\pi^*L)F = 0 \hspace{2mm},\hspace{2mm} \xi^{r-2}(\pi^*L)(\pi^*L) = L^2$.
\end{center}
We first prove the following useful lemma.
\begin{lem}\label{lem4.1}
 If $E$ is a semistable bundle of rank $r \geq 2$ on a smooth complex projective surface $X$ with $c_2\bigl(\End(E)\bigr) = 0$, then $\lambda_E^{r} = 0$.
 \begin{proof}
 \begin{align}\label{seq0}
 \lambda_E^{r} = \bigl(\xi - \frac{1}{r}\pi^*c_1(E)\bigr)^{r} = \xi^{r} - \xi^{r-1}\pi^*c_1(E) + \frac{1}{r^2}\frac{r(r-1)}{2}\xi^{r-2}(\pi^*c_1(E))^2.
 \end{align}
 Since $c_2\bigl(\End(E)\bigr) = 2rc_2(E) -(r-1)c^2_1(E) = 0$, we have $c^2_1(E) = \frac{2r}{r-1}c_2(E)$. 
 
 \vspace{2mm}
 Replacing this in  (\ref{seq0}) and using Grothendieck's relation we get the result.
\end{proof}
\end{lem}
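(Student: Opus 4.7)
The plan is to exploit the fact that $X$ is a surface so that all pullbacks of codimension $\geq 3$ classes vanish, which reduces the binomial expansion of $\lambda_E^r$ to only three terms, and then to kill the remaining terms using Grothendieck's relation together with the numerical identity $c_1^2(E) = \tfrac{2r}{r-1}c_2(E)$ that follows from $c_2(\mathrm{End}(E)) = 0$.

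Concretely, I would first expand
\begin{align*}
\lambda_E^r = \Bigl(\xi - \tfrac{1}{r}\pi^*c_1(E)\Bigr)^r = \sum_{k=0}^{r}\binom{r}{k}\xi^{r-k}\Bigl(-\tfrac{1}{r}\pi^*c_1(E)\Bigr)^k,
\end{align*}
and observe that for $k \geq 3$ the factor $(\pi^*c_1(E))^k$ is the pullback of a class of codimension $k$ on the surface $X$, hence vanishes. This leaves exactly the three surviving terms already displayed as equation (\ref{seq0}) in the excerpt, namely $\xi^r$, $-\xi^{r-1}\pi^*c_1(E)$, and $\tfrac{r-1}{2r}\xi^{r-2}(\pi^*c_1(E))^2$.

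Next, I would apply Grothendieck's defining relation for the Chow ring of the projective bundle,
\begin{align*}
\xi^r - \pi^*c_1(E)\cdot\xi^{r-1} + \pi^*c_2(E)\cdot\xi^{r-2} - \cdots = 0,
\end{align*}
which on a surface truncates (again by dimension) to $\xi^r = \pi^*c_1(E)\cdot\xi^{r-1} - \pi^*c_2(E)\cdot\xi^{r-2}$. Substituting this into the expansion cancels the $\xi^{r-1}\pi^*c_1(E)$ terms and leaves
\begin{align*}
\lambda_E^r = \xi^{r-2}\cdot\pi^*\Bigl(\tfrac{r-1}{2r}c_1(E)^2 - c_2(E)\Bigr).
\end{align*}
The last step is to plug in the identity $c_1(E)^2 = \tfrac{2r}{r-1}c_2(E)$ obtained from $c_2(\mathrm{End}(E)) = 2rc_2(E) - (r-1)c_1^2(E) = 0$; the scalar becomes $\tfrac{r-1}{2r}\cdot\tfrac{2r}{r-1} - 1 = 0$ and the whole expression vanishes.

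There is no serious obstacle here: semistability is not even used directly in the computation (it only enters through the hypothesis that makes $c_2(\mathrm{End}(E))$ a meaningful constraint), the dimension hypothesis does all the truncation, and Grothendieck's relation combined with the single algebraic identity on Chern classes closes things off. The only point to take a little care with is the combinatorial coefficient $\binom{r}{2}\tfrac{1}{r^2} = \tfrac{r-1}{2r}$, which is precisely the reciprocal of the coefficient appearing in the $c_2(\mathrm{End}(E))$ relation, so the cancellation is exact.
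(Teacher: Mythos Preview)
Your proof is correct and follows exactly the same approach as the paper's: expand $\lambda_E^r$ binomially, truncate using $\dim X = 2$, and then combine Grothendieck's relation with the identity $c_1^2(E) = \tfrac{2r}{r-1}c_2(E)$ coming from $c_2(\End(E))=0$. You have simply made explicit the substitution step that the paper leaves to the reader.
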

\begin{thm}\label{thm4.2}
Let $E$ be a semistable vector bundle of rank $r \geq 2$ with $c_2\bigl(\End(E)\bigr) = 0$ on a smooth complex projective surface $X$ of Picard number $\rho(X) = 1$. We denote the numerical class of a fibre of the projection $\pi : \mathbb{P}(E) \longrightarrow X$ by $F$. If $L_X \in N^1(X)_{\mathbb{R}}$ denotes the numerical class  of an ample generator of the N\'{e}ron-Severi group $N^1(X)_{\mathbb{Z}}$, then 
\begin{center}
$\overline{\Eff}^k\bigl(\mathbb{P}(E)\bigr) = \Bigl\{ a \lambda_E^k + b \lambda_E^{k-1}\pi^*L_X + c\lambda_E^{k-2}F \mid a,b,c \in \mathbb{R}_{\geq 0} \Bigr\}$ for all $k$ satisfying $ 1 < k < r$.
\end{center}
Moreover, $E$ is a $k$-homogeneous bundle on $X$ i.e., $\overline{\Eff}^k\bigl(\mathbb{P}(E)\bigr) = \Nef^k\bigl(\mathbb{P}(E)\bigr)$ for all $ 1 \leq k  < r $.
\begin{proof}
By Theorem \ref{thm3.1} $\lambda_E$ is nef. So for any $k\in\{2,\cdots,r-1\}$ $\lambda_E^k$, $\lambda_E^{k-1}\pi^*L_X$ and $\lambda_E^{k -2}F$ are intersections of nef divisors and hence they are pseudo-effective. Also $\lambda_E^r = 0$.

Now if $a\lambda_E^k + b \lambda_E^{k-1}\pi^*L_X + c\lambda_E^{k-2}F \in \overline{\Eff}^k\bigl(\mathbb{P}(E)\bigr)$, then
\vspace{2mm}

$c = \{a\lambda_E^k + b \lambda_E^{k-1}\pi^*L_X + c\lambda_E^{k-2}F\} \cdot \lambda_E^{r+1-k} \geq 0$,
\vspace{2mm}

$bL_X^2 = \{a\lambda_E^k + b \lambda_E^{k-1}\pi^*L_X + c\lambda_E^{k-2}F\} \cdot \lambda_E^{r-k}\pi^*L_X  \geq 0$,
\vspace{2mm} 

This implies $ b \geq 0$ ( Since $L_X$ being ample, $L_X^2 \geq 0$).
\vspace{2mm}

$a = \{a\lambda_E^k + b \lambda_E^{k-1}\pi^*L_X + c\lambda_E^{k-2}F\} \cdot \lambda_E^{r-1-k}F \geq 0$.
\vspace{2mm}

 This proves that for all $k$ satisfying $ 1 < k < r$,
 \begin{align*}
  \overline{\Eff}^k\bigl(\mathbb{P}(E)\bigr) = \Bigl\{ a \lambda_E^k + b \lambda_E^{k-1}\pi^*L_X + c\lambda_E^{k-2}F \mid a,b,c \in \mathbb{R}_{\geq 0} \Bigr\}.
 \end{align*}
 So for any $1 < k < r$,
 \vspace{2mm}
 
 $\overline{\Eff}_k\bigl(\mathbb{P}(E)\bigr) = \overline{\Eff}^{r+1-k}\bigl(\mathbb{P}(E)\bigr)$
 $=\Bigl\{ a \lambda_E^{r+1-k} + b \lambda_E^{r-k}\pi^*L_X + c\lambda_E^{r-1-k}F \mid a,b,c \in \mathbb{R}_{\geq 0} \Bigr\}$.
 \vspace{2mm}
 
 Any element $\alpha := x\lambda_E^k + y \lambda_E^{k-1}\pi^*L_X + z\lambda_E^{k-2}F \in N^k\bigl(\mathbb{P}(E)\bigr)$ with $x,y,z \in \mathbb{R}$ is in  $\Nef^k\bigl(\mathbb{P}(E)\bigr)$ if and only if 
 \vspace{2mm}
 \begin{center}
 $\alpha\cdot\lambda_E^{r+1-k} = z \geq 0$ \hspace{2mm},\hspace{2mm} $\alpha\cdot\lambda_E^{r-k}\pi^*L_X = yL_X^2 \geq 0$ \hspace{2mm} and \hspace{2mm} $\alpha\cdot\lambda_E^{r-1-k}F = x \geq 0$. 
 \end{center}
 Therefore, 
$\overline{\Eff}^k\bigl(\mathbb{P}(E)\bigr) = \Nef^k\bigl(\mathbb{P}(E)\bigr)$ for all $k \in \{1,2,\cdots,r-1\}$.
\end{proof}
\end{thm}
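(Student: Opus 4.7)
The plan is to describe $\overline{\Eff}^k\bigl(\mathbb{P}_X(E)\bigr)$ explicitly inside $N^k\bigl(\mathbb{P}_X(E)\bigr)_{\mathbb{R}}$, which under $\rho(X)=1$ is three-dimensional for $2 \le k \le r-1$. The natural basis $\{\xi^k,\xi^{k-1}\pi^*L_X,\xi^{k-2}F\}$ differs from $\{\lambda_E^k,\lambda_E^{k-1}\pi^*L_X,\lambda_E^{k-2}F\}$ by a triangular change of coordinates (since $\lambda_E = \xi - \tfrac{1}{r}\pi^*c_1(E)$), so it suffices to work in the $\lambda_E$-basis throughout.

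For the containment $\supseteq$, I would invoke Theorem \ref{thm3.1} to get nefness of $\lambda_E$, together with the nefness of $\pi^*L_X$ as pullback of an ample divisor. Since $\rho(X)=1$ forces $F = \tfrac{1}{L_X^2}(\pi^*L_X)^2$, each proposed extremal generator $\lambda_E^k,\ \lambda_E^{k-1}\pi^*L_X,\ \lambda_E^{k-2}F$ is a non-negative scalar multiple of an intersection of nef divisors, and such intersections are limits of intersections of amples, hence pseudo-effective.

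For the reverse containment, I would take an arbitrary $\alpha = a\lambda_E^k + b\lambda_E^{k-1}\pi^*L_X + c\lambda_E^{k-2}F \in \overline{\Eff}^k\bigl(\mathbb{P}_X(E)\bigr)$ and pair it with three test classes of complementary codimension $r+1-k$ that are again products of nef divisors: $\lambda_E^{r+1-k}$, $\lambda_E^{r-k}\pi^*L_X$ and $\lambda_E^{r-1-k}F$. Each such pairing is a pseudo-effective $0$-cycle on a projective variety and therefore non-negative. Using Lemma \ref{lem4.1} (which yields $\lambda_E^r = 0$) together with $\xi^{r-1}F = 1$, $F \cdot \pi^*D = 0$ for any divisor $D$ on $X$, and $F^2 = 0$, a short direct computation collapses the three pairings to $c$, $bL_X^2$ and $a$ respectively; since $L_X^2 > 0$ this gives $a,b,c \ge 0$, and together with $\supseteq$ pins down the cone.

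Finally, for $k$-homogeneity the case $k=1$ is Corollary \ref{corl3.5}, so I would handle $2 \le k \le r-1$ by observing that the same argument (with $k$ replaced by $r+1-k$, which still lies in the allowed range) produces the analogous description of $\overline{\Eff}^{r+1-k}\bigl(\mathbb{P}_X(E)\bigr)$. Dualising via $\Nef^k = \bigl(\overline{\Eff}^{r+1-k}\bigr)^{\vee}$, the same intersection table shows that a class $x\lambda_E^k + y\lambda_E^{k-1}\pi^*L_X + z\lambda_E^{k-2}F$ is nef iff $x,y,z \ge 0$, matching the pseudo-effective description. The one step that I see as requiring care, rather than a formal consequence of the definition, is the positivity of pairings of a pseudo-effective class against an intersection of nef divisors; this is not automatic in the higher-codimension setting, but it reduces to the elementary fact that a nef divisor intersected with a limit of effective cycles is again a limit of effective cycles, followed by the triviality that pseudo-effective $0$-cycles on a projective variety are non-negative multiples of a point.
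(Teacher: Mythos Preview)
Your proposal is correct and follows essentially the same route as the paper: establish pseudo-effectivity of the three generators via nefness of $\lambda_E$ and $\pi^*L_X$ (Theorem~\ref{thm3.1}), use Lemma~\ref{lem4.1} and the three complementary test classes $\lambda_E^{r+1-k}$, $\lambda_E^{r-k}\pi^*L_X$, $\lambda_E^{r-1-k}F$ to extract $a,b,c\ge 0$, and then dualise using the description of $\overline{\Eff}^{r+1-k}$ to identify $\Nef^k$. Your explicit observation that $F = \tfrac{1}{L_X^2}(\pi^*L_X)^2$ is in fact a useful addition, since the paper's phrase ``intersections of nef divisors'' glosses over the fact that $F$ has codimension two.
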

\begin{thm}\label{thm4.3}
Let $\rho : X = \mathbb{P}_C(G) \longrightarrow C$ be a ruled surface defined by a semistable rank 2 bundle $G$ of slope $\mu$ on a smooth complex projective curve $C$ and $\eta := c_1\bigl(\mathcal{O}_{\mathbb{P}_C(G)}(1)\bigr) \in N^1(X)_{\mathbb{R}}$.  Assume that $E$ is a semistable bundle of rank $r \geq 2$ on $X$ with $c_2\bigl(\End(E)\bigr) = 0$. Denote the numerical classes of a fibre of $\rho$ and a fibre of the projection map $\pi : \mathbb{P}(E) \longrightarrow X$ by $f$ and $F$ respectively. Then
\begin{center}
$\overline{\Eff}^k\bigl(\mathbb{P}(E)\bigr) = \Bigl\{ a\lambda_E^k + b\lambda_E^{k-1}\bigl(\pi^*\eta - \mu\pi^*f\bigr) + c\lambda_E^{k-1}\pi^*f + d\lambda_E^{k-2}F \mid a,b,c,d \in \mathbb{R}_{\geq 0}\Bigr\}$,
\end{center}
for all $k$ satisfying $1 < k < r$. Also, 
$E$ is a $k$-homogeneous bundle on $X$ i.e., $\overline{\Eff}^k\bigl(\mathbb{P}(E)\bigr) = \Nef^k\bigl(\mathbb{P}(E)\bigr)$ for all $k \in \{1,2,\cdots,r-1\}$.
\begin{proof}
By Theorem \ref{thm3.1} $\lambda_E$ is nef. Also 
\begin{center}
 $ \Nef^1(X) = \Nef^1\bigl(\mathbb{P}_C(G)\bigr) =\bigl\{ a(\eta - \mu f) + bf \mid a,b \in \mathbb{R}_{\geq 0}\bigr\}$.
\end{center}
  For any $k\in\{2,\cdots,r-1\}$, $\lambda_E^k$, $\lambda_E^{k-1}\bigl(\pi^*\eta - \mu \pi^*f\bigr)$, $\lambda_E^{k-1}\pi^*f$ and $\lambda_E^{k -2}F$ are intersections of nef divisors and hence they are pseudo-effective. 

Now if $a\lambda_E^k + b \lambda_E^{k-1}\pi^*\eta + c\lambda_E^{k-1}\pi^*f + d\lambda_E^{k-2}F \in \overline{\Eff}^k\bigl(\mathbb{P}(E)\bigr)$, then using the intersection products as above we have
\vspace{2mm}

$d = \{a\lambda_E^k + b \lambda_E^{k-1}\pi^*\eta +  c\lambda_E^{k-1}\pi^*f + d\lambda_E^{k-2}F\} \cdot \lambda_E^{r+1-k} \geq 0$,
\vspace{2mm}

$a = \{a\lambda_E^k + b \lambda_E^{k-1}\pi^*\eta + c\lambda_E^{k-1}\pi^*f + d\lambda_E^{k-2}F\} \cdot \lambda_E^{r-1-k}F \geq 0$.
\vspace{2mm}

$b = \{a\lambda_E^k + b \lambda_E^{k-1}\pi^*\eta + c\lambda_E^{k-1}\pi^*f + d\lambda_E^{k-2}F\} \cdot \lambda_E^{r-k}\pi^*f  \geq 0 $.
\vspace{2mm}

$ \{a\lambda_E^k + b \lambda_E^{k-1}\pi^*\eta  + c\lambda_E^{k-1}\pi^*f + d\lambda_E^{k-2}F\} \cdot \lambda_E^{r-k}(\pi^*\eta - \mu \pi^*f) \geq 0$ 
\vspace{2mm}

This implies $ b\eta^2 - \mu b\eta f + c \eta f  = 2b\mu - b\mu + c = \bigl( b\mu + c \bigr) \geq 0$,
\vspace{2mm}

Hence $a\lambda_E^k + b \lambda_E^{k-1}\pi^*\eta + c\lambda_E^{k-1}\pi^*f + d\lambda_E^{k-2}F$
\vspace{2mm}

$= a\lambda_E^k + b\bigl\{\lambda_E^{k-1}(\pi^*\eta - \mu \pi^*f)\bigr\} + (b\mu + c)\lambda_E^{k-1}\pi^*f + d\lambda_E^{k-2}F \hspace{1mm} \in \overline{\Eff}^k\bigl(\mathbb{P}(E)\bigr)$.
\vspace{2mm}

 This proves that 
$\overline{\Eff}^k\bigl(\mathbb{P}(E)\bigr) = \Bigl\{ a\lambda_E^k + b\lambda_E^{k-1}(\pi^*\eta - \mu \pi^*f) + c\lambda_E^{k-1}\pi^*f + d\lambda_E^{k-2}F \mid a,b,c,d \in \mathbb{R}_{\geq 0}\Bigr\}$ for all $k$ satisfying $ 1 < k < r$. A similar argument as in Theorem \ref{thm4.2} will prove that $\overline{\Eff}^k\bigl(\mathbb{P}(E)\bigr) = \Nef^k\bigl(\mathbb{P}(E)\bigr)$ for all $ 0 < k < r$. 
\end{proof}
\end{thm}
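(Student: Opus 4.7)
The plan is to mirror the proof of Theorem \ref{thm4.2}, but adapted to accommodate the Picard-number-$2$ base $X = \mathbb{P}_C(G)$, which introduces one additional generator. First I would collect the three key structural inputs: $\lambda_E \in \Nef^1(\mathbb{P}(E))$ from Theorem \ref{thm3.1}, the vanishing $\lambda_E^r = 0$ from Lemma \ref{lem4.1} (whose proof only needs $X$ to be a surface, with no restriction on the Picard number), and Miyaoka's theorem applied to the semistable rank-$2$ bundle $G$ on $C$, which gives $\Nef^1(X) = \{a(\eta - \mu f) + bf \mid a, b \geq 0\}$. In particular, both $\eta - \mu f$ and $f$ are nef on $X$.

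For the inclusion $\supseteq$, I would observe that each of the four proposed generators $\lambda_E^k$, $\lambda_E^{k-1}\pi^*(\eta - \mu f)$, $\lambda_E^{k-1}\pi^*f$, and $\lambda_E^{k-2}F$ is an intersection of nef classes and is therefore pseudo-effective (here $F$ is nef as the pullback under $\pi$ of a point of $X$).

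For the reverse inclusion $\subseteq$, I would write a general $\alpha \in \overline{\Eff}^k(\mathbb{P}(E))$ in the basis $\{\lambda_E^k,\, \lambda_E^{k-1}\pi^*\eta,\, \lambda_E^{k-1}\pi^*f,\, \lambda_E^{k-2}F\}$ as $\alpha = a\lambda_E^k + b\lambda_E^{k-1}\pi^*\eta + c\lambda_E^{k-1}\pi^*f + d\lambda_E^{k-2}F$, and then extract the coefficients by pairing with the four nef classes $\lambda_E^{r+1-k}$, $\lambda_E^{r-1-k}F$, $\lambda_E^{r-k}\pi^*f$, and $\lambda_E^{r-k}\pi^*(\eta - \mu f)$. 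These computations rest on $\lambda_E^r = 0$, the projection-formula identity $\pi_*\lambda_E^{r-1} = [X]$, and the ruled-surface intersections $\eta \cdot f = 1$ and $f^2 = 0$. The decisive numerical input is $(\eta - \mu f)^2 = 0$, obtained from $\eta^2 = \deg(G) = 2\mu$; this makes the last pairing disentangle and extracts $b\mu + c$ cleanly rather than an opaque mixture. Once $a \geq 0$, $b \geq 0$, $b\mu + c \geq 0$, and $d \geq 0$ are in hand, rewriting $b\pi^*\eta = b\pi^*(\eta - \mu f) + b\mu\pi^*f$ lands $\alpha$ inside the proposed cone.

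For $k$-homogeneity, I would note that any class in the same basis lies in $\Nef^k(\mathbb{P}(E))$ if and only if the same four pairings are all non-negative, since the dual cone $\overline{\Eff}_k(\mathbb{P}(E))$ is generated by the four dual classes (verified by applying the pairing computation to $\overline{\Eff}_k$ itself, exactly as in Theorem \ref{thm4.2}). Thus the nef and pseudo-effective cones admit identical descriptions, giving $k$-homogeneity for all $1 \le k < r$. The main obstacle I anticipate is arranging the four pairings so that each extracts a single coefficient; the identity $(\eta - \mu f)^2 = 0$ is the pivotal simplification, and without it the coefficients would entangle and the structural pattern from Theorem \ref{thm4.2} would not transfer.
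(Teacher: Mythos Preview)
Your proposal is correct and follows essentially the same approach as the paper's proof: the same four nef generators, the same four dual pairings to extract $a,b,d\ge 0$ and $b\mu+c\ge 0$, the same rewriting $b\,\pi^*\eta = b\,\pi^*(\eta-\mu f)+b\mu\,\pi^*f$, and the same appeal to the Theorem~\ref{thm4.2} argument for $k$-homogeneity. Your explicit emphasis on $(\eta-\mu f)^2=0$ (equivalently $\eta^2=2\mu$) is exactly the numerical identity the paper uses to simplify the fourth pairing.
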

\begin{xrem}
\rm Let $E$ be a semistable bundle of rank $r$ on a smooth projective complex variety of dimension $n$ and $c_2\bigl(\End(E)\bigr) = 0$. Then $E$ is projectively flat. So if $c(E)$ is the total Chern class of $E$, then $c(E) = \bigl( 1 + \frac{c_1(E)}{r}\bigr)^r$. In particular, $c_i(E) = \binom ri \bigl(\frac{c_1(E)}{r}\bigr)^i$ for all $i$. This implies that $\lambda_E^r = \bigl(\xi - \frac{1}{r}\pi^*c_1(E)\bigr)^{r} = \sum\limits_i(-1)^i\binom ri \xi^{r-i}\pi^*(\frac{c_1(E)}{r})^i  = 0$ by applying Grothendieck relation. So similar approaches can be taken to calculate $\overline{\Eff}^k(\mathbb{P}(E)), \Nef^k(\mathbb{P}(E))$ over higher dimensional smooth varieties $X$. However the computations of these cones will be complicated in general.
\end{xrem}


\begin{thebibliography}{************}

\normalsize
\baselineskip=16pt

\bibitem[B12]{B12} Caucher Birkar,
\emph{On existence of log minimal models and weak Zariski decompositions,}
Math. Ann. 354(2), (2012) 787-799.

\bibitem[BDPP13]{BDPP13} S. Boucksom, J.P. Demailly, M. P\u{a}un and T Peternell,
\emph{The pseudo-effective cone of a compact K\"{a}hler manifold and varieties of negative Kodaira dimension,}
Journal of Algebraic Geometry. 22(2), (2013) 201-248.

\bibitem[BHP14]{B-H-P} Indranil Biswas, Amit Hogadi and A.J.Parameswaran,
\emph{Pseudo-effective cone of Grassmann Bundles over a curve,}
Geom Dedicata, 172, (2014) 69-77.

\bibitem[BB08]{B-B}{Indranil Biswas and Ugo Bruzzo,}
\emph{On Semistable Principal Bundles over a Complex Projective Manifold,}
International Mathematics Research Notices, Article ID rnn035, (2008).

\bibitem[CC15]{C-C}D. Chen and Izzet Coskun,
\emph{Extremal higher codimension cycles on modulii spaces of curves,}
Proc. Lond. Math. Soc, 111, (2015) 181-204.

\bibitem[CLO16]{C-L-O} Izzet Coskun, John Lesieutre and John Christian Ottem,
\emph{Effective cones of cycles on blow-ups of projective space,}
Algebra and Number Theory, Volume 10, Number 9, (2016) 1983-2014.

\bibitem[C86]{C86} S. Dale Cutkosky,
\emph{Zariski decomposition of divisors on algebraic varieties,}
Duke Math. J. 53, no. 1, (1986) 149-156.

\bibitem[DELV11]{D-E-L-V} Olivier Debarre, Lawrence Ein, Robert Lazarsfeld and Claire Voisin,
\emph{Pseudoeffective and nef classes on abelian varieties,}
 Compositio Mathematica, 147, (2011) 1793-1818.
 
\bibitem[F79]{F79} Takao Fujita,
\emph{On Zariski problem,}
Proc. Japan Acad. Ser. A Math. Sci. 55(3),  (1979) 106–110.

\bibitem[F86]{F86} Takao Fujita,
\emph{Zariski decomposition and canonical rings of elliptic threefolds,}
J. Math. Soc. Japan 38, no. 1, (1986) 19-37.

\bibitem[F98]{F} William Fulton.
\emph{Intersection Theory,}
Second Edition, Springer, (1998).


\bibitem[F11]{Fu} Mihai Fulger,
\emph{The cones of effective cycles on projective bundles over curves,}
Math.Z., 269, (2011) 449-459.

\bibitem[H77]{H} Robin Hartshorne,
\emph{Algebraic Geometry,} 
Graduate Text in Mathematics, Springer, (1977).

\bibitem[HL10]{HL10} Daniel Huybrechts and Manfred Lehn.
\emph{The Geometry of Moduli Spaces of Sheaves,}
Second Edition, Cambridge University Press, (2010).

\bibitem[KM20]{K-M} Rupam Karmakar, Snehajit Misra.
\emph{Nef cone and Seshadri constants of products of projective bundles of any dimemsion over curves,}
 Journal of Ramanujan Mathematical Society, 35(4), (2020) 317-325. 

\bibitem[KMR19]{K-M-R} Rupam Karmakar, Snehajit Misra and Nabanita Ray,
\emph{Nef and Pseudoeffective cones of product of projective bundles over a curve,}
Bull. Sci. math., 151,  (2019) 1-12.

\bibitem[L07]{L1} Robert Lazarsfeld,
\emph{Positivity in Algebraic Geometry, Volume I,}
Springer, (2007).

\bibitem[L11]{L2} Robert Lazarsfeld,
\emph{Positivity in Algebraic Geometry, Volume II,}
Springer, (2011).

\bibitem[Le14]{Le14} John Lesieutre,
\emph{The diminished base locus is not always closed,}
Compos. Math. 150(10), (2014) 1729-1741.

\bibitem[MR20]{MN20} Snehajit Misra and Nabanita Ray,
\emph{Nef cones of projective bundles over surfaces and Seshadri constants,}
\href{https://arxiv.org/pdf/1904.02335v3.pdf}{arXiv:1904.02335[math.AG]}(2020).

\bibitem[M20]{M20} Snehajit Misra,
\emph{Stable Higgs bundles on ruled surfaces,}
Indian Journal of Pure and Applied Mathematics, 51,  (2020) 735-747.

\bibitem[M87]{Mi} Yoichi Miyaoka,
\emph{The Chern classes and Kodaira Dimension of a Minimal Variety,}
Advanced Studies in Pure Mathematics 10,  Algebraic Geometry, Sendai, 1985. (1987). pp. 449-476.

\bibitem[M78]{Muk78} Shigeru Mukai,
\emph{Semi-homogeneous vector bundles on an abelian variety,}
J. Math. Kyoto Univ. (JMKYAZ) 18-2, (1978) 239-272.

\bibitem[MN84]{MN84} V B Mehta and Madhav Nori,
\emph{Semistable sheaves on homogeneous spaces and abelian varieties,}
Proc. Indian Acad. Sci. (Math Sci) Vol 93, No 1, November (1984) pp 1-12.


\bibitem[MSC15]{M-S-C} Roberto Mu\~{n}oz, Fulvio Di Scillo and Luis E. Sol\'{a} Conde,
\emph{On the existance of a Weak Zariski Decomposition on a projective vector bundles,}
Geom. Dedicata, 179, (2015) 287-301.

\bibitem[N99]{N99} Noboru Nakayama,
\emph{Normalized Tautological divisors of semi-stable vector bundles,}
(Japanese) Free resolutions of coordinate rings of projective varieties and related topics (Japanese) (Kyoto, 1998). Sūrikaisekikenkyūsho Kōkyūroku No. 1078 (1999), 167-173.


\bibitem[N04]{N04} Noboru Nakayama,
\emph{Zariski-decomposition and Abundance,}
MSJ Memoirs, vol. 14.(2004) Mathematical Society of Japan, Tokyo.

\bibitem[P03]{P03} Prokhorov Y.G.,
\emph{On the Zariski decomposition problem,}
Tr. Mat. Inst. Steklova 240 (Biratsion. Geom. Linein. Sist. Konechno Porozhdennye Algebry),(2003) 43-72.

\bibitem[Z62]{Z62} Oscar Zariski,
\emph{The theorem of Riemann-Roch for high multiples of an effective divisor on an algebraic surface,}
Ann. Math. 2(76), (1962) 560-615.


\end{thebibliography}
\end{document}